\documentclass[11pt,reqno]{amsart}
\usepackage{amssymb}
\usepackage{amsmath}
 \usepackage[usenames,dvipsnames]{color}
\usepackage[kerning=true]{microtype}
\usepackage[page]{appendix}
\usepackage[all,cmtip]{xy}
\usepackage{pgf,tikz}
\usepackage{fullpage}
\usepackage{amsthm}
\usepackage{bm}
\usepackage[T1]{fontenc}
\usepackage[applemac]{inputenc}
\usepackage[mathcal]{euscript}
\usepackage[colorlinks=true, citecolor=cyan,urlcolor=blue,linktocpage=true]{hyperref}


\definecolor{midgreen}{rgb}{0.,0.7,0.} 
\definecolor{green}{rgb}{0.,0.6,0.} 

\def\bleu{\textcolor{blue}}


\newcommand{\carrejaune}[2]{\filldraw[draw=black,fill=yellow,line width=.15mm] (#1-.5,#2-.5) rectangle (#1+.5,#2+.5);}
\newcommand{\carrevert}[2]{\filldraw[draw=black,fill=midgreen,line width=.15mm] (#1-.5,#2-.5) rectangle (#1+.5,#2+.5);}
\newcommand{\carrebleu}[2]{\filldraw[draw=black,line width=.15mm, fill=blue, inner sep=1pt] (#1-.5,#2-.5) rectangle (#1+.5,#2+.5);}


\DeclareMathOperator{\area}{area}

\newcommand{\A}{\mathcal{A}}
\newcommand{\sbinom}[2]{\textstyle\binom{#1}{#2}}
\newcommand{\coeff}{\bm{c}}

\newcommand{\Cat}{\mathrm{Cat}}
\newcommand{\Cmuk}[2]{\coeff_{#1}^{\langle #2\rangle}}
\newcommand{\coeffprime}{\bm{d}}
\newcommand{\charac}{\raise 2pt\hbox{\large$\chi$}}

\DeclareMathOperator{\des}{des}

\newcommand{\eperp}[1]{(e_{#1}^\perp\otimes \Id)}
\newcommand{\E}{\mathcal{E}}

\newcommand{\Enk}[2]{\E_{#1}^{\langle #2\rangle}}
\newcommand{\F}{\mathcal{F}}

\newcommand{\MacH}{\widetilde{H}}
\DeclareMathOperator{\GL}{GL}
\DeclareMathOperator{\Id}{Id}
\DeclareMathOperator{\length}{\ell}

\newcommand\myatop[2]{\genfrac{}{}{0pt}{}{#1}{#2}}
\newcommand{\M}{\mathcal{M}}

\newcommand{\N}{\mathbb{N}}
\newcommand{\qbinom}[2]{\genfrac{[}{]}{0pt}{}{#1}{#2}_q}
\newcommand{\Rational}{\mathbb{Q}}

\newcommand{\R}{\mathcal{R}}
\newcommand{\Rnk}[2]{\R_{#1}^{\langle #2\rangle}}

\newcommand{\scalar}[2]{{\langle#1,#2 \rangle}}
\newcommand{\scalarstar}[2]{{\langle#1,#2 \rangle}_{\scriptscriptstyle \otimes}}

\renewcommand{\S}{\mathbb{S}}

\newcommand{\Vdm}{\bm{V}}


\newcommand{\auteur}[1]{{\sc #1}}
\newcommand{\titreref}[1]{{\em #1}}
\newcommand{\vol}[1]{{\bf #1}}
\newcommand{\MathReview}[1]{\href{http://www.ams.org/mathscinet-getitem?mr=#1}{MR#1}.}
\renewcommand{\MathReview}[1]{ }


\newcommand{\pref}[1]{{\rm (\ref{#1})}}

\newcommand{\define}[1]{\bleu{\bf{#1}}}


\newtheorem{lemma}{\bleu{Lemma}}
\newtheorem{conjecture}{\bleu{Conjecture}}

\newtheorem{prop}{\bleu{Proposition}}

\numberwithin{equation}{section}
\numberwithin{lemma}{section}
\numberwithin{rmk}{section}
\numberwithin{cor}{section}

\title[Multi]{$(\GL_k\times\S_n)$-Modules of Multivariate Diagonal Harmonics.
}
\author{F.~Bergeron}
\address{\href{http://bergeron.math.uqam.ca}{D\'epartement de Math\'ematiques, Lacim, UQAM.}}
  \email{\href{mailto:bergeron.francois@uqam.ca}{bergeron.francois@uqam.ca}}
  \date{\bleu{\bf \today}. This work was supported by NSERC}

\begin{document}


\begin{abstract} 
This is the first in a series of papers in which we describe explicit structural properties of spaces of diagonal rectangular harmonic polynomials in $k$ sets of $n$ variables, both as $\GL_k$-modules and $\S_n$-modules, as well as some of there relations to areas such as Algebraic Combinatorics, Representation Theory, Algebraic Geometry, Knot Theory, and Theoretical Physics. Our global aim is to develop a unifying point of view for several areas of research of the last two decades having to do with Macdonald Polynomials Operator Theory, Diagonal Coinvariant Spaces, Rectangular-Catalan Combinatorics, the Delta-Conjecture, Hilbert Scheme of Points in the Plane, Khovanov-Rozansky Homology of $(m,n)$-Torus links, etc. 
\end{abstract}

\maketitle
 \parskip=0pt
{ \setcounter{tocdepth}{1}\parskip=0pt\footnotesize \tableofcontents}
\parskip=8pt  
\parindent=20pt

\section{\bleu{Introduction}}
Initiated at  the beginning of the 1990s, the work of Garsia and Haiman~\cite{garsia_haiman} relating to Macdonald polynomials sparked a strong and long-lasting interest in modules of ``diagonal harmonic polynomials'' in two sets of variables, whose overall dimension is $(n+1)^{n-1}$. Links to parking functions on the combinatorial side, and Hilbert Schemes of points in the plane~\cite{haimanhilb,haimanvanishing} on the Algebraic Geometry one, were soon established. These links have become much better understood in recent years. Since its inception, this line of inquiry has opened many fruitful areas of research, including recent ties with Khovanov-Rozansky homology of $(m,n)$-torus links (see~\cite{haglund_knot,mellit_homology,mellit_braids,morton_samuelson,wilson}), and ties to the study of super-spaces of Bosons-Fermions. A significant part of this story involves a formula for the (bi)graded character of the above-mentioned modules
in the form $\nabla(e_n)$, where $\nabla$ is an operator (introduced in~\cite{ScienceFiction}) having the (combinatorial) Macdonald symmetric functions as joint eigenfunctions, here applied to the degree $n$ elementary symmetric function $e_n$. This operator has many interesting properties on its own, and several important questions about it are still being actively investigated  (see~\cite{multihooks,open,mellit_braids,NegutShuffle}). 

Our aim is to describe several structural properties of the $(\GL_k\times\S_n)$-module, denoted by $\Enk{n}{k}$,  of $k$-variate diagonal harmonic polynomials (previously studied in~\cite{multicomplex}), which specialize at $k=2$ to the module of  diagonal harmonic polynomials of Garsia and Haiman. 
As discussed in~\cite{multicomplex}, there is a filtration of $\S_n$-module (over the field $\Rational$)
\begin{equation}\label{filtration}
  \xymatrix@-=0.4cm{
           \Rational=\Enk{n}{0}    \ar@{^{(}->}[r] 
	&\Enk{n}{1}  \ar@{^{(}->}[r] 
	&\Enk{n}{2}  \ar@{^{(}->}[r] 
	& \ar@{.}[r]
	&\ \ar@{^{(}->}[r]
	&   \Enk{n}{k}     \ar@{^{(}->}[r] 
	& \ar@{.}[r]
	&\ \ar@{^{(}->}[r]
        & \bleu{\E_{n} } := \displaystyle\lim_{k\rightarrow \infty} \Enk{n}{k},}
\end{equation}
which ``stabilizes'' when $k$ becomes large enough. 
The first non-trivial module, {\sl i.e.} $\Enk{n}{1}$, of this filtration plays a crucial role in several subjects,  up to natural isomorphisms. Under different guises, it appears as the cohomology ring of the full-flag manifold, as the coinvariant space of the symmetric group $\S_n$, as well as the space of $\S_n$-harmonic polynomials.
These are well-known results going back to the 1950s (see~\cite{sheppard}). This module carries a graded version of the regular representation of $\S_n$,
and  it may simply be described as the linear span of all derivatives (to all orders) of the {Vandermonde determinant} $\Vdm(x)$ in the variables $x=(x_1,\ldots,x_n)$. Among several interesting bases, one of the simplest is the set of partial derivatives $\partial {x}^{\bm{d}}\Vdm(x)$, for
$\bm{d}=(d_1,\ldots, d_n)\in\N^n$ such that $0\leq d_k\leq k-1$, 
where $\partial {x}^{\bm{d}}:= {\partial x_1^{d_1}}\cdots  {\partial x_n^{d_n}}$. 
From this, it may be seen that its graded dimension is the $q$-analogue of $n!$. In fact, the corresponding graded irreducible decomposition as a $\S_n$-module is entirely encoded in terms of the Macdonald polynomial $\MacH_n(q;\bm{z})$, in variables $\bm{z}=(z_i)_{i\in\N}$ (discussed further in the sequel).  

The work of Garsia-Haiman, on the case $k=2$, is seminal in this whole line of investigation. 
The corresponding space, denoted by $\Enk{n}{2}$, may be obtained from $\Enk{n}{1}$ by further closing with respect to \define{higher polarization} operators:
 $\sum_{i=1}^n y_i^j {\partial x_i}$, for any $j\geq 1$ (as well as similar operators obtained by exchanging the role of $x$ and $y$).
Since the early 1990s, many new lines of investigation have been added to their original framework. Noteworthy among recent developments are the successive appearances: of ``rational'', and ``rectangular'' Catalan combinatorics (see~\cite{ALW, reiner,armstrong,open}); as well as interesting ties between these combinatorial settings and the elliptic Hall algebra, introduced by Burban-Schiffmann (see~\cite{BurbanSchiffmann}). This last context offers a broad extension to the spectrum of operators first considered in~\cite{identity}, particularly those that appear in Theorem 4.4 therein.  

In parallel, an explicit study of $\Enk{n}{3}$ (obtained by closing $\Enk{n}{2}$  with respect to polarization operators involving a third set of variables) was started about 10 years ago. Most fundamental questions about it remain open, but its study has suggested new (hard to prove) combinatorial identities linked to the Tamari Lattice as discussed in~\cite{trivariate,hopf}. The general $k$-framework is also considered in a previous paper~\cite{multicomplex}, where some broad properties of the modules of $k$-variate diagonal coinvariant (for any finite complex reflection groups), considering the inductive limit
$ \E_{n}:=\lim_{k\rightarrow \infty} \Enk{n}{k}$ 
as a $\GL_\infty\times \S_n$-module (with commuting actions), and its decomposition:
  \begin{equation}
      \E_{n} \simeq\bigoplus_{\mu\vdash n} \bigoplus_{\lambda} \left(\mathcal{W}_\lambda \otimes \mathcal{V}_\mu\right)^{\oplus c_{\lambda,\mu}},
   \end{equation}
into $\GL_\infty$-polynomial-irreducibles (the $\mathcal{W}_\lambda$'s), and $\S_n$-irreducibles ($\mathcal{V}_\mu$'s).
A crucial missing part in our previous work on the general case was an explicit description of this irreducible decomposition of $\E_n$ (the values of the $c_{\lambda,\mu}$).  We are currently going to describe the structure of this decomposition, as see that it is characterized by a strikingly small set of data. This involves a precise and explicit link between various $\S_n$-isotypic components of the modules under study, established in part via Vertex Operators (see~\cite{zabrocki}).   A surprising corollary of our approach is that we can reconstruct the alternating component of $\E_n$ from the only knowledge of (hook-shape components)  of the module $\Enk{n}{k}$, with $k=\lfloor \frac{n-1}{2}\rfloor$ (rather than having to calculate up to $k=n-1$).

Many seemingly independent aspects of the theory for $k=2$ are nicely explained and tied together via general properties of the $\E_{n}$, and their decomposition into irreducible components. As we will see most of these ties cannot be explained if one stays in the restricted context of $k=2$. In other terms, it is only by going to the general stable framework that the simplicity of the underlying structure is revealed. 
 In particular, we establish a surprising connection with the ``Delta operators'' $\Delta_{e_a}$ (see~\cite{identity}), which generalize the $\nabla$ operator, shedding new light on an open conjecture of Haglund-Remmel-Wilson (see~\cite{CarlssonMellit,haglund}). Indeed, one of our main conjecture states that
 \begin{conjecture}[Delta-by-Skewing]\label{Delta_Conj} For $n=a+b+1$, we have
\begin{equation}\label{calculation_for_hooks}
    \Delta'_{e_{a}}\,e_n(\bm{z})=(\eperp{b}\,\E_n)(q,t;\bm{z}).
    \end{equation}
   \end{conjecture}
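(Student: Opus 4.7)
The plan is to verify the identity coefficient-by-coefficient in the $s_\mu(\bm{z})$-basis, using the $(\GL_\infty\times\S_n)$-isotypic decomposition $\E_n=\bigoplus_{\lambda,\mu}(\mathcal{W}_\lambda\otimes\mathcal{V}_\mu)^{c_{\lambda,\mu}}$ stated in the excerpt. Applying $e_b^\perp$ to the $\GL$-factor and specializing the $\GL$-variables to $(q,t)$ gives
\begin{equation*}
(\eperp{b}\,\E_n)(q,t;\bm{z}) \;=\; \sum_{\lambda,\mu}c_{\lambda,\mu}\,s_{\lambda/(1^b)}(q,t)\,s_\mu(\bm{z}),
\end{equation*}
and the factor $s_{\lambda/(1^b)}(q,t)$ vanishes unless the skew shape $\lambda/(1^b)$ fits in two rows, restricting $\lambda$ to ``double-hook'' shapes $(\lambda_1,\lambda_2,1^m)$ with $m\le b$. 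So the conjecture pins down a very restricted family of $\GL$-multiplicities of $\E_n$.

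On the left, I would expand in the modified Macdonald basis via the eigenvalue identity $\Delta'_{e_a}\MacH_\mu=e_a[B_\mu-1]\,\MacH_\mu$ together with $e_n=\sum_\mu \MacH_\mu/\prod_{c\in\mu}h_c(q,t)$, then convert to Schur via the modified Kostka matrix $\widetilde{K}_{\mu\nu}(q,t)$. The conjecture then reduces to the equation
\begin{equation*}
\sum_{\lambda}c_{\lambda,\mu}\,s_{\lambda/(1^b)}(q,t) \;=\; \sum_{\nu\vdash n}\frac{e_a[B_\nu-1]}{\prod_{c\in\nu}h_c(q,t)}\,\widetilde{K}_{\mu\nu}(q,t),
\end{equation*}
required to hold for every $\mu\vdash n$, with the sum on the left supported on double-hook shapes $\lambda$.

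To close the loop, my strategy is to exploit the Zabrocki vertex operators signalled in the excerpt to build explicit $\Delta'_{e_a}$-eigenvectors living in the double-hook $\GL$-isotypic pieces of $\E_n$. The vertex operators organize $\S_n$-isotypic components into ``ladders'' across the filtration $\E_n^{\langle k\rangle}$, and the double-hook $\GL$-restrictions should single out exactly the ladder-tops, extending the corollary mentioned in the introduction that the alternating component is recovered from hook $\GL$-components at $k=\lfloor(n-1)/2\rfloor$. The main obstacle is verifying that the images of these vertex operators realize exactly the Macdonald eigenvalue $e_a[B_\mu-1]$; once that is done, the Haglund--Remmel--Wilson combinatorics become a direct shadow of a purely structural identity on $\E_n$, which is why I expect this last step to be essentially as hard as the Delta conjecture itself.
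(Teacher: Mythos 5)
The statement you are attempting to prove is explicitly labelled a \emph{Conjecture} in the paper (Conjecture~\ref{Delta_Conj}), and the paper gives no proof of it; it only records supporting evidence such as the symmetry~\pref{Delta_hooks}, the consistency with the known formula~\pref{formula_for_delta_e1} for $\Delta_{e_1}(e_n)$, the fact that it implies Conjecture~\ref{Hook_Conj} (citing~\cite{wallace}), and partial implications towards Conjecture~\ref{length_conjecture}. So there is nothing in the paper to compare your argument against on equal footing; what matters is whether your proposal actually closes the statement, and it does not.

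Your reduction is sound as far as it goes: interpreting $(\eperp{b}\E_n)(q,t;\bm{z})$ via the $\GL_\infty\times\S_n$ decomposition, noting that $s_{\lambda/(1^b)}(q,t)$ survives only for $\lambda$ of the form $(\lambda_1,\lambda_2,1^m)$ with $m\le b$, and rewriting $\Delta'_{e_a}e_n$ in the $\MacH_\nu$ basis, all correctly reformulate the conjecture as an identity between the length-$\le 2$ restriction of the skewed $\GL$-coefficients $e_b^\perp\coeff_\mu$ and a Macdonald-eigenvalue expression. But this reformulation contains no new information beyond the conjecture itself: you have re-expressed both sides, not related them. The crux --- that a vertex-operator construction produces elements of the double-hook $\GL$-isotypic components of $\E_n$ realizing the eigenvalues $e_a[B_\mu-1]$ --- is precisely the content that would need to be proved, and you concede in your final sentence that this step is ``essentially as hard as the Delta conjecture itself.'' That concession is accurate: the $c_{\lambda,\mu}$ are not known explicitly (the paper emphasizes that their explicit description was the ``crucial missing part'' of the earlier work~\cite{multicomplex}), and the Zabrocki vertex-operator machinery is invoked in the paper only as a tool for organizing and relating isotypic components, not as a mechanism that has been shown to produce Macdonald eigenvectors inside $\E_n$. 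So what you have written is an equivalent reformulation together with a heuristic for why one might hope to settle it, not a proof; the statement remains open both in the paper and in your proposal.
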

   \noindent  
Conjecture~\ref{Delta_Conj} establishes a connection with the "Superspace" of Bosons-Fermions, the study of which is ongoing. In a nutshell, the generic Frobenius characteristic of the superspace is conjectured to be given by the plethysm $\E_n[\bm{q}-\varepsilon \bm{u};\bm{z}]$ (see the appendix for  plethysms involving ``$\varepsilon$'').

\section{\bleu{Spaces of multivariate diagonal harmonic polynomials}}\label{spaces}
Let us first set up our context and recall some classical notions (see also appendix).
Let $\bm{x}$ be a $\infty\times n$ matrix of variables:
           $$\bm{x}= \begin{pmatrix}
               x_{11} &  x_{12} &\cdots & x_{1n}\\
	       x_{21} &  x_{22} &\cdots & x_{2n}\\
		\vdots  &\vdots  & \ddots & \vdots
          \end{pmatrix}.$$
We think of each row $(x_{i1} ,x_{i2} ,\ldots , x_{in})$ of $\bm{x}$ as a ``set of $n$ variables''. We sometimes restrict $\bm{x}$ to its first $k$ rows, thus getting a $k\times n$ matrix denoted by $\bm{x}_{\langle k\rangle}$; and then consider the ring $\Rnk{n}{k}:=\Rational[\bm{x}_{\langle k\rangle}]$, of polynomials  in the variables $\bm{x}_{\langle k\rangle}$. Letting $k$ go to infinity, we get the limit ring $\R_n:=\Rational[\bm{x}]$ of polynomials  in the variables $\bm{x}$. To each $\infty\times n$ matrix of positive integers $\alpha=(a_{ij})$, of finite support, we associate the \define{monomial} 
    $$\bm{x}^\alpha:=\textstyle \prod_{ij} x_{ij}^{a_{ij}},$$ 
 in $\R_n$. The \define{$i^{\rm th}$-degree} of  $\bm{x}^\alpha$ is the sum of the entries lying on row $i$ of $\alpha$: 
   $$\deg_i( \bm{x}^\alpha):=a_{i1}+a_{i2}+\ldots+a_{in}.$$
It follows from the constraint on $\alpha$, that the \define{degree} of $\bm{x}^\alpha$, defined as
   $$\deg(\bm{x}^\alpha):=(\deg_{1}(\bm{x}^\alpha),\deg_{2}(\bm{x}^\alpha),\ldots,\deg_{i}(\bm{x}^\alpha),\ldots),$$
lies in the semi-group $\N^\infty$ of \define{finite support sequences of integers}. In this way, one gets
two gradings, one by degree and the other by \define{total degree}, on the ring $\R=\R_n$
       $$\R = \bigoplus_{\bm{d}\in\N^\infty} \R_{\bm{d}},\qquad {\rm and}\qquad \R = \bigoplus_{j\in\N} \R^{(j)},$$
with \define{homogeneous components} $\R_{\bm{d}}$ and  $\R^{(j)}$, respectively  spanned by the monomials $\bm{x}^\alpha$ having \define{row sum sequence} equal to $\bm{d}=(d_1,d_2,\ldots)$, and by monomials of total degree $j=d_1+d_2+\dots=|\bm{d}|$. In other terms,
	$$\R^{(j)}:=\bigoplus_{|\bm{d}|=j} \R_{\bm{d}}.$$
We denote by $\rho(\alpha)$ the row sum sequence $(d_1,d_2,\ldots)$, where $d_i:=a_{i1}+a_{i2}+\ldots+a_{in}$.
The $\R_{\bm{d}}$' are clearly such that $\R_{\bm{d}}\R_{\bm{d}'}\subseteq \R_{\bm{d}+\bm{d}'}$.
For a given $\bm{d}\in\N^\infty$, a polynomial $F(\bm{x})$ lies in $\R_{(\bm{d})}$ if and only if $F(\bm{q}\cdot \bm{x})=\bm{q}^{\bm{d}} F(\bm{x})$,
with $\bm{q}$ standing for a diagonal matrix ${\rm diag}(q_1,q_2,\ldots,\ldots)$ of formal ``parameters'' $q_i$; writing $\bm{q}^{\bm{d}}:=\prod_i q_i^{d_1}$, for any  $\bm{d}$ in $\N^\infty$.

\subsection*{\texorpdfstring{$(\GL_k\times \S_n)$}{GLkSn}-Modules} The ring $\R=\R_n$ (or $\R=\Rnk{n}{k}$) is equipped with two commuting actions, one of $\S_n$ and one of the group\footnote{Recall that, for $k=\infty$, this is the group of $\N\times n$ matrices that coincide with the identity matrix in all rows rows but a finite number.} $\GL_k$, for $k\in \N\cup\{\infty\}$, which may be jointly  defined by
    $$F(\bm{x})\longmapsto F(g\cdot \bm{x}\cdot \sigma),\quad {\rm for}\quad g\in \GL_k \quad {\rm and}\quad \sigma\in \S_n.$$
Here, elements of $\S_n$ are considered as $n\times n$ permutation matrices. In particular, the above turns $\R$ into a polynomial representations of $\GL_k$. The $\S_n$-action preserves the grading, so that each homogenous component $\R_{\bm{d}}$ inherits the $S_n$ action. The $\GL_k$-action preserves total degree, so that $\R^{(j)}$ inherits the whole $(\GL_k \times \S_n)$-action. 

All the modules $\M$ that we are going to consider in the sequel are homogeneous $(\GL_k\times \S_n)$-submodules of $\R$, and polynomial representations of $\GL_k$ (with $k$  in $\N\cup\{\infty\}$). They thus afford a decomposition, graded by degree, on the form
 $$\M=\bigoplus_{\bm{d}\in\N^k} \M_{\bm{d}},\qquad {\rm with}\qquad \M_{\bm{d}}:=\M\cap \R_{\bm{d}}.$$
Each homogenous component $\M_{\bm{d}}$ affords some basis $\mathcal{B}_{\bm{d}}$, of homogeneous polynomials. The \define{$\GL_k$-character} of $\M$ (it also said to be its \define{Hilbert series}) may then simply calculated as
     $$\M(\bm{q}):=\sum_{\bm{d}\in\N^k} \dim(\M_{\bm{d}})\, \bm{q}^{\bm{d}} .$$
Classical results the representation theory of $\GL_\infty$ ensure that these characters are symmetric function of the variables $\bm{q}=(q_1,q_2,\ldots)$, which expand as positive integer linear combinations of Schur functions $s_\lambda(\bm{q})= s_\lambda(q_1,q_2,\ldots)$, indexed by partition $\lambda$. Recall that positive integral linear combination of Schur functions are said to be \define{Schur positive}.
Also recall that it is usual to encode characters of representations of $\S_n$ as symmetric functions\footnote{This is just to say that we have polynomials in a denumerable set of ``abstract''  variables $\bm{z}=(z_i)_{i\in\N}$.} via their \define{Frobenius transform}. In this encoding, irreducible $\S_n$-modules correspond to Schur functions $s_\mu(\bm{z})$, for $\mu$ partitions of $n$.  Summing up, the $(\GL_\infty\times \S_n)$-modules $\M$ we consider, have decompositions that can be presented in the format
\begin{equation}\label{formule_avec_parametres}
  \M(\bm{q};\bm{z})=\sum_{\mu\vdash n}\sum_\lambda a_{\lambda\mu} s_\lambda(\bm{q}) s_\mu(\bm{z}),
\end{equation}
with the $a_{\lambda\mu}$ standing for multiplicities of $(\GL_\infty\times \S_n)$-irreducibles. Observe that the ``restriction'' to variables in $\bm{X}_k$ can simply be obtained by setting $q_i=0$ for al $i>k$.  We use the following ``Schur-$\otimes$-Schur'' format to express our formulas for characters
\begin{equation}\label{formules_tenseurs}
   \M= \sum_{\mu\vdash n}\sum_\lambda a_{\lambda\mu} s_\lambda \otimes s_\mu,
 \end{equation}
with the tensor product allowing us to distinguish between Schur functions that are characters of $\GL_\infty$ (those on left-hand side), and Schur functions that encode $\S_n$-irreducibles (those on right-hand side). The specialization $\M(\bm{q};\bm{z})$ of $\M$ corresponds to the ``evaluation'' $s_\lambda(\bm{q}) s_\mu(\bm{z})$ of the tensor product $s_\lambda\otimes s_\mu$.


\subsection{Diagonal harmonic polynomials} As allude to in the introduction, the modules that we consider are obtained as follows.
Starting with the classical {Vandermonde determinant} $\Vdm_{n}=\Vdm_{n}(\bm{x})$, in the variables $\bm{x}=(x_1,\ldots,x_n)$ (the first row of $\bm{X}$), we consider the smallest graded $\S_n$-submodule $\Enk{n}{k}$  of $\Rnk{n}{k}$ which contains $\Vdm_n$, and is closed under:
\begin{itemize}
\item partial derivatives with respect to any variables in $\bm{X}_k$ (the first $k$ rows of $\bm{X}$); as well as
\item \define{higher polarization} operators $\sum_{i=1}^n u_i \partial v_i^j$,
 for any pair or rows $(u_i)_i$ and $(v_i)_i$ of $\bm{X}_k$, and any $j\geq 1$.
 \end{itemize}
The fact that $\Enk{n}{k}$ is closed under polarization ensures that it is also a  $\GL_k$-submodule of $\Rnk{n}{k}$. 
The filtration in~\pref{filtration} is compatible with the action $\S_n$; and with the restriction from $\GL_{k}$ to $\GL_{k-1}$, which corresponds to restriction to polynomials in the variables $\bm{X}_{k-1}$ (clearly contained in $\bm{X}_k$). Hence, we get an action of $\GL_\infty\times \S_n$ on $\E_n$, by passing to the associated (stable) limit. It has been shown in~\cite{multicomplex} that one reaches stability at $k=n-1$. This implies that the multiplicities $c_{\lambda\mu}$, in the ``Schur-$\otimes$-Schur'' character $ \E_{n}=\sum_{\mu\vdash n}\sum_\lambda c_{\lambda\mu} s_\lambda \otimes s_\mu$,
are non-vanishing only for partitions $\lambda$ having at most $n-1$ parts, and having size (sum of parts) at most $\binom{n}{2}$. 
Each $\S_n$-isotypic components of type $\mu$  affords the structure of a $\GL_\infty$-module. We denote by $\coeff_{\mu}:=\sum_{\lambda} c_{\lambda\mu}\, s_\lambda$ the corresponding $\GL_\infty$-character, 
and say that this is the \define{coefficient} of $s_\mu(\bm{z})$ in $\E_n(\bm{q};\bm{z})$.
The character of $\Enk{n}{k}$ is readily obtained from $\E_n$ as the evaluation:
  \begin{equation}\label{definition_En}
      \E_n(\bm{q};\bm{z}) =\sum_{\mu\vdash n} \coeff_{\mu}(\bm{q}) s_\mu(\bm{z}),
   \end{equation}
in $k$ parameters $\bm{q}=q_1,\ldots, q_k$ (thus $k$ is specified), and formal variables $\bm{z}=(z_i)_{i\in \N}$. In particular,
\begin{equation}
     \E_n(q;\bm{z})=\frac{h_n[\bm{z}/(1-q)]}{h_n[1/(1-q)]},\qquad {\rm and}\qquad  \E_n(q,t;\bm{z})=\nabla(e_n)(q,t;\bm{z}).
 \end{equation}
For $n\leq 5$, we have the following explicit values for $\E_n$:
\begin{itemize}
\setlength{\itemindent}{-.3in}
\setlength\itemsep{4pt}
{\small
\item[] $\E_0 = 1\otimes 1$,
\item[] $\E_1=1\otimes s_{1}$,
\item[] $\E_2=1\otimes s_{2}+s_{1}\otimes s_{11}$,
\item[] $\E_3=1\otimes s_{3}+(s_{1} + s_{2})\otimes s_{21}+(s_{11} + s_{3})\otimes s_{111}$,
\item[] $\begin{aligned}&\E_4=1\otimes s_{4}+(s_{1} + s_{2} + s_{3})\otimes s_{31}+(s_{21} + s_{2} +  s_{4})\otimes s_{22}\\
             &\qquad +(s_{11} + s_{21} + s_{31} + s_{3}  + s_{4} + s_{5})\otimes s_{211}+(s_{111} + s_{31} + s_{41} + s_{6})\otimes s_{1111},
             \end{aligned}$
\item[] $\begin{aligned}&\E_5=1\otimes s_{5} + (s_{1} + s_{2} + s_{3} + s_{4}) \otimes s_{41}
         + (s_{22} +   s_{21} +   s_{31} + s_{41}  + s_{2} + s_{3} + s_{4} + s_{5} + s_{6}) \otimes s_{32}\\
         &\qquad  + (s_{32} + s_{11} + s_{21} + 2s_{31} + s_{41} + s_{51} + s_{3} + s_{4} + 2s_{5} + s_{6} + s_{7}) \otimes s_{311} \\
         &\qquad + ( s_{211} + s_{311} + s_{22} + s_{32} + s_{42}
          + s_{21}  + s_{31} + 2s_{41} + 2s_{51} + s_{61} + s_{4}  + s_{5} + s_{6} + s_{7} + s_{8}) \otimes s_{221}\\
         &\qquad  + (s_{111} + s_{211} + s_{311} + s_{411} + s_{33} + s_{32} + s_{42} + s_{52}
         \\  &\qquad\qquad\qquad  
         + s_{31} + 2s_{41} + 2s_{51} + 2s_{61} + s_{6} + s_{7} + s_{71} + s_{8} + s_{9}) \otimes s_{2111}\\
         &\qquad  + (s_{1111} + s_{311} + s_{411} + s_{511}
         + s_{43} + s_{42} + s_{62} + s_{61} + s_{71} + s_{81} + s_{(10)}) \otimes s_{11111}.
         \end{aligned}$}
\end{itemize}
We present in a similar  format the expansions of $\nabla(e_n)$ (which corresponds to the restriction of $\E_n$ to the $\lambda$'s that have at most $2$ parts). For sure, these expressions for $\nabla(e_n)$ may be directly calculated from their ``usual'' expansion as linear combination of Schur function with coefficients in $\N[q,t]$, simply by rewriting these coefficients (which are symmetric polynomials in $q$ and $t$) in terms of Schur functions. For instance, on rewrites
   $$\nabla(e_3)(q,t;\bm{z})= s_3(\bm{z})+(q+t+q^2+qt+t^2) s_{21}(\bm{z})+(qt+q^3+q^2t+qt^2+t^3)s_{111}(\bm{z})$$
  as $\nabla(e_3)=1\otimes s_3 +(s_1+s_2)\otimes s_{21}+(s_{11}+s_3)s_{111}$.
This allows us to express $\E_6$ in the following reasonably compact manner, assuming that the Schur-$\otimes$-Schur expansion of $\nabla(e_6)$ is known.
\begin{itemize}
\setlength{\itemindent}{-.3in}
\item[] 
{\small $ \begin{aligned}
&\E_6=\nabla(e_6)+(s_{221} + s_{411})\otimes  s_{33}
         + (s_{221} + 2s_{321} + s_{421} + s_{211} +  2s_{311} + 2s_{411} +  2s_{511} + s_{611})\otimes  s_{321}\\
         &\qquad   + (s_{331} + s_{321} + s_{421} + s_{521} + s_{111} + s_{211} + 2s_{311} + 2s_{411} + 2s_{511} + s_{611} + s_{711})\otimes  s_{3111}\\
         &\qquad    + (s_{3111} + s_{331} + s_{221} + s_{321} + s_{421} + s_{521} + s_{311} + s_{411} + 2s_{511} + s_{611} + s_{711})\otimes  s_{222}\\
         &\qquad    + (s_{2111} + s_{3111} + s_{4111} + s_{331} + s_{431} + s_{221} + 2s_{321} + 3s_{421} + 2s_{521} + s_{621}\\
         &\qquad\qquad\qquad + s_{211} + s_{311} + 3s_{411} + 3s_{511} + 4s_{611} + 2s_{711} + s_{811})\otimes  s_{2211}\\
         &\qquad    + (s_{1111} + s_{2111} + s_{3111} + s_{4111} + s_{5111} + s_{331} + 2s_{431} + s_{531}   + s_{321} + 2s_{421} + 2s_{521}\\
         &\qquad\qquad\qquad  + 2s_{621} + s_{721} + s_{311} + 2s_{411} + 3s_{511} + 3s_{611} + 3s_{711} + 2s_{811} + s_{911})\otimes  s_{21111}\\
         &\qquad    + (s_{11111} + s_{3111} + s_{4111} + s_{5111} + s_{6111} + s_{441} + s_{431} + s_{531} + s_{631}+ s_{421} \\
         &\qquad\qquad\qquad + s_{521} + s_{621} + s_{721} + s_{821} + s_{611} + s_{711} + 2s_{811} + s_{911} + s_{(10,1,1)})\otimes  s_{111111}.
\end{aligned} $}
\end{itemize}


\subsection{Numerical Specializations}
We may evaluate $s_\mu$ at $k\in\N$, using the well-known formula
\begin{equation}\label{evaluation_schur}
   s_\mu(k)=s_\mu(\underbrace{1,1,\ldots,1}_{k\ {\rm copies}},0,\ldots)=\textstyle \prod_{(i,j)\in\mu} \frac{k+(j-i)}{h(i,j)},
  \end{equation}
where $(i,j)$ runs over the set of cells of $\mu$, and $h(i,j)$ stands for the associated \define{hook length}. The stableness  of~\pref{filtration} (discussed in the sequel) implies that the (ungraded) Frobenius characteristic of the $\S_n$-module $\Enk{n}{k}$, obtained as $\E_{n}(k;\bm{z})$, 
is polynomial in the parameter $k$ for any given $n$. For instance, $\E_{2}(k;\bm{z})= k\,s_{11}(\bm{z}) + s_{2}(\bm{z})$,
 and
\begin{align*}
\E_{3}(k;\bm{z})&=\textstyle \big(\binom{k+2}{3} + \binom{k}{2})\,s_{111}(\bm{z})  + \big(\binom{k+1}{2} +  \binom{k}{1}\big)\,s_{21}(\bm{z}) + s_{3}(\bm{z})\\
   &= \textstyle\big(\binom{k+1}{3} + \binom{k-1}{2})\,e_3(\bm{z})  + \big(\binom{k}{2} +  2\binom{k-1}{1}\big)\,\,e_{21}(\bm{z}) + e_{111}(\bm{z});\\
\E_{4}(k;\bm{z})&=\textstyle \big(\binom{k+2}{3} + \binom{k}{2})\,s_{111}(\bm{z})  + \big(\binom{k+1}{2} +  \binom{k}{1}\big)\,s_{21}(\bm{z}) + s_{3}(\bm{z})
\end{align*}
Using the fact that $\dim \Enk{n}{k} :=\scalar{ \E_{n}(k;\bm{z})}{p_1(\bm{z})^n}$ and
 $\dim \A_{n}^{\langle k\rangle } :=\scalar{ \E_{n}(k;\bm{z})}{e_n(\bm{z})}$, we deduce from the above that
the dimensions of $\E_{n}^{\langle k\rangle}$ and $\A_{n}^{\langle k\rangle}$ are also polynomial in $k$, for any given $n$.  For instance, we have the following positive integer linear combinations of binomial coefficient polynomials
\begin{itemize}\itemsep=4pt
\item [] $ \begin{aligned}
&  \dim\,\Enk{2}{k} = k+1, && \dim\,\A_2^{\langle k\rangle} = k;\\
&  \dim\,\Enk{3}{k}= \textstyle \binom{k}{3} + 5 \,  \binom{k}{2} + 5 \,  k + 1, \qquad &&\dim\,\A_3^{\langle k\rangle} =\textstyle \binom{k}{3} + 3 \, \binom{k}{2} +k;
\end{aligned}$
\item [] $\dim\,\Enk{4}{k}= \textstyle\binom{k}{6} + 12 \,  \binom{k}{5} + 51 \,  \binom{k}{4} + 96 \,  \binom{k}{3} + 78 \,  \binom{k}{2} + 23 \,  k +  1$,
\item [] $\dim\,\A_4^{\langle k\rangle} = \textstyle\binom{k}{6} + 9 \,  \binom{k}{5} + 25 \,  \binom{k}{4} + 29 \,  \binom{k}{3} + 12 \,  \binom{k}{2} +  k$.
\end{itemize}
It would be nice to have an explicit combinatorial understanding of these expressions in general. 
As alluded to in the introduction, we have explicit (or conjectured) expressions for values of these various polynomials for all $n$, when $k\leq3$. Namely.
\begin{itemize}\itemsep=4pt
\item for $k=0$, since $\Enk{n}{0}=\Rational$, we have $\E_{n}(0;\bm{z}) =s_n(\bm{z})$;
\item
for $k=1$, the module $\Enk{n}{1}$ is the regular representation of $\S_n$, of dimension $n!$, and we have the well-known formula
\begin{align}
\E_{n}(1;\bm{z}) &=\sum_{\mu\vdash n} f^\mu\,s_\mu(\bm{z})\label{formule_E1n}\\
			&= e_1(\bm{z})^n,
\end{align}
with $f^\mu$ denoting the number of standard Young tableaux of shape $\mu$ (given by the hook-length formula);

\item
For $k=2$, the module $\Enk{n}{2}$ is the module of ``parking functions'', and we have (see~\cite{stanley})
\begin{align}
\E_{n}(2;\bm{z}) &=\sum_{\mu\vdash n}(-1)^{n-\length(\mu)} (n+1)^{\length(\mu)-1} {z_\mu^{-1}}\,p_\mu(\bm{z})\label{formule_E2n}\\
			&= \frac{1}{n+1}e_n[(n+1)\,\bm{z}].
\end{align}
From this one easily derives
\begin{equation}
\dim\,\Enk{n}{2}=(n+1)^{n-1},\qquad{\rm and}\qquad 
\dim\,\A_n^{\langle 2\rangle}=\frac{1}{n+1}\binom{2\,n}{n};
\end{equation}

\item
For $k=3$ the following formula is conjectured to hold (see~\cite{trivariate})
\begin{align}
\E_{n}(3;\bm{z}) &=\sum_{\mu\vdash n} (-1)^{n-\length(\mu)} (n+1)^{\length(\mu)-2}\,{z_\mu^{-1}}\,p_\mu(\bm{z})\,\textstyle \prod_{k \in\mu} \binom{2\,k}{k},\label{formule_E3n}\\
  &=\frac{1}{(n+1)^2}\Phi_n[2(n+1)\,\bm{z}],\quad {\rm with}\quad\Phi_n(\bm{z}) :=\textstyle \sum_{\mu\vdash n}f_\mu(\bm{z})\prod_{k\in \mu} \Cat_k, 
\end{align}
implying that
\begin{equation}
\dim\,\Enk{n}{3}=2^n(n+1)^{n-2},\qquad {\rm and}\qquad 
\dim\,\A_n^{\langle 3\rangle}=\frac{2}{n(n+1)}\binom{4\,n+1}{n-1}.
\end{equation}
\end{itemize}

\noindent
No such simple formulas are known for larger positive values of $k$. It may be seen that, at $k=-1$, the $k$-variate polynomial expression $\E_n(k;\bm{z})$ evaluates to $p_n(\bm{z})$. 

\subsection{An intriguing negative evaluation}
We have observed that, at $k=-2$, the $k$-variate polynomial $ \dim(\E_{n}^{\langle k\rangle})$ appears to evaluate to the signed Catalan numbers $(-1)^{n-1}\Cat_{n-1}$. Moreover, this intriguing signed Catalan property seems to afford the refinement:
\begin{equation}\label{formule_catalan}
\E_{n}(-2;\bm{z})=(-1)^{n-1}\sum_{\mu\vdash n} \pi(\mu) \,\Cat_{\length(\mu)-1}\, f_\mu(\bm{z}),
\end{equation}
where $\pi(\mu)$ is the product of the parts of $\mu$. In turn, this is the specialization at $q=1$ of the formula
\begin{equation}
{(-q)^{n-1}\E_{n}[-q-1/q;\bm{z}]=\sum_{\mu\vdash n} \textstyle{\prod_{k\in\mu}([k]_{q^2}) \,C_{\length(\mu)-1}(-q)\, f_\mu(\bm{z})}}
\end{equation}
where 
$$ C_n(q):=\sum_{k=1}^{2n+1}\sbinom{n}{\lfloor{(k-1)/2\rfloor}}\sbinom{n}{\lfloor{k/2\rfloor}} q^{k-1};$$
since we may check that $C_n(-1)=\Cat_{n}$.

\section{\bleu{Main results-conjectures}}
Considering the ``scalar product''
such that $\langle f\otimes s_\nu, s_\mu\rangle = f$, so that $\langle \E_n,s_\mu\rangle$ is the coefficient of $s_\mu$ in $\E_n$, we may express our first main ``fact'' as follows:
\begin{conjecture}[Hook-Components]\label{skewing_theorem} For all $n$ and all $0\leq k \leq n-1$, if $\mu$ is the hook shape $(k+1,1^{n-k-1})$, then we have the identity
   \begin{equation}\label{hook_skew}
       e_k^\perp \A_n = \langle \E_n, s_\mu\rangle.
   \end{equation}
In particular, $e_{n-1}^\perp \A_n=1$. 
\end{conjecture}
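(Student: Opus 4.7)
The plan is to reformulate the identity adjointly and then lift it to the module level using vertex operators.

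By Hall adjointness together with the Pieri rule $e_k \cdot s_\lambda = \sum_\nu s_\nu$ (summed over $\nu$ such that $\nu/\lambda$ is a vertical $k$-strip), the assertion $e_k^\perp \A_n = \coeff_{(k+1,1^{n-k-1})}$ is equivalent, for every partition $\lambda$, to the statement that the multiplicity of the $\GL_\infty$-irreducible $s_\lambda$ in the hook isotypic equals the sum of the multiplicities of $s_\nu$ in $\A_n$, taken over partitions $\nu$ obtained from $\lambda$ by adding a vertical $k$-strip. The task is thus to construct, for each highest weight vector of $\GL_\infty$-type $\lambda$ in the hook isotypic component, a matching family of highest weight vectors in $\A_n$ indexed by such vertical $k$-strips attached to $\lambda$.

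I would realize this correspondence through vertex operators, following~\cite{zabrocki}. The Bernstein creation operator $B_{k+1} = \sum_{i\geq 0}(-1)^i h_{k+1+i}\,e_i^\perp$ satisfies the identity $B_{k+1}(s_{1^{n-k-1}}) = s_{(k+1,1^{n-k-1})}$, a direct consequence of the Jacobi--Trudi formula for hooks. The plan is to lift $B_{k+1}$ to an $\S_n$-equivariant operator $\mathbf{B}_{k+1}$ on $\E_n$, built from the higher polarization operators (which generate the $\GL_\infty$-action) together with an $\S_n$-antisymmetrization that converts hook-type symmetry into sign symmetry. The central algebraic identity to verify is that, at the level of $\GL_\infty$-characters, the image of $\mathbf{B}_{k+1}$ intertwines the action in such a way as to produce Pieri multiplication by $e_k$ on $\coeff_{(k+1,1^{n-k-1})}$.

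The main obstacle is bijectivity of the resulting character map. Injectivity should follow from nondegeneracy of higher polarization on the relevant isotypic components. Surjectivity is the delicate part: one must rule out spurious $\GL_\infty$-types in $\A_n$ that are not reached by the vertex-operator image. A natural strategy is induction on $k$, anchored at the two boundary cases $k=0$ (where $\coeff_{(1^n)} = \A_n$ trivially) and $k=n-1$ (which reduces to $e_{n-1}^\perp\A_n = 1$, equivalent to $\A_n$ containing a unique $\GL_\infty$-component of type $s_{1^{n-1}}$, generated by $\Vdm_n$). A complementary route is to attempt to derive Hook-Components from Conjecture~\ref{Delta_Conj} by analyzing the $s_{1^n}$-coefficient of the Delta-by-Skewing identity after specialization to two $\bm{q}$-variables, and then using the $\GL_\infty$-stability of $\E_n$ from~\cite{multicomplex} to propagate the result to arbitrarily many variables.
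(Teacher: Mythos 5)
This statement is presented in the paper as a \emph{conjecture} (``Hook-Components''), not a theorem; the paper offers no proof, and the only partial evidence it gives is the single-parameter case, where $\E_n(q;\bm{z})=\MacH_n(q;\bm{z})$ and the identity follows from the known formula~\pref{Hn_binom}. So there is no paper proof to compare your proposal against.

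That said, what you wrote is a sketch of a strategy, not a proof, and you acknowledge this yourself. Your opening reformulation via the dual Pieri rule is correct, and aiming at vertex operators (Bernstein/Zabrocki) is consistent with the paper's brief remark that it establishes links between isotypic components ``in part via Vertex Operators.'' However, the core of the argument is never supplied: you do not actually construct the lifted $\S_n$-equivariant operator $\mathbf{B}_{k+1}$ on $\E_n$, nor do you verify the claimed intertwining identity at the level of $\GL_\infty$-characters. You then explicitly concede the injectivity/surjectivity questions, and surjectivity is exactly where the content of the conjecture lives. Your ``complementary route'' --- deriving the result from Conjecture~\ref{Delta_Conj} --- does not close the argument either, since Delta-by-Skewing is also open; at best it would exhibit an implication between two conjectures (and the paper cites~\cite{wallace} for the reverse implication, that Delta-by-Skewing implies Hook-Components).

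One concrete error: your base case $k=n-1$ is mischaracterized. The identity $e_{n-1}^\perp \A_n = 1$ is not equivalent to ``$\A_n$ contains a unique component of type $s_{1^{n-1}}$.'' Since $e_{n-1}^\perp s_\nu$ is nonzero whenever $\ell(\nu)\geq n-1$, one also needs that no other $s_\nu$ with $\ell(\nu)=n-1$ appears in $\A_n$. That stronger structural fact (equivalently, $\E_n^{(n-1)} = e_{n-1}\otimes e_n$, which the paper records but does not prove in this form) is what would have to be established.

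In short: the statement is conjectural in the source, your reformulation is sound and your strategic instincts align with a hint in the paper, but the construction at the heart of the argument is missing, the bijectivity is left open, the fallback route is circular, and the boundary case is misstated.
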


One of the interesting implication of~\pref{hook_skew}, together with Conjecture.\ref{length_conjecture} below, is that we can reconstruct $\A_n$ from (very) partial knowledge of the values of 
the $\langle \E_n, s_\mu\rangle$. To see how this goes, let us first state the following conjecture, defining the \define{length} $\ell(f)$ of a symmetric function $f$, to be the maximum number of parts $\ell(\lambda)$ in a partition $\lambda$ that index a Schur function $s_\lambda$ occurring with non-zero coefficient $a_\lambda$ in its Schur expansion $f=\sum_\lambda a_\lambda f_\lambda$. In formula: 
    $$\ell(f) =\max_{a_\lambda\not=0} \ell(\lambda).$$

\begin{conjecture}[Coefficient-Length]\label{length_conjecture} For all partition $\mu$ of $n$, we have $\ell(\langle \E_n, s_\mu\rangle) = n-\mu_1$. In particular, $(n-k-1)$ is the length of the coefficient of $s_\mu$, for the hook-shape $\mu=(k+1,1^{n-k-1})$.
\end{conjecture}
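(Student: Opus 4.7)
My plan is to split the equality $\ell(\coeff_\mu) = n - \mu_1$ into an upper and lower bound and handle each via Conjecture~\ref{skewing_theorem} (Hook-Components) in the hook case, then bootstrap to general $\mu$.

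For the lower bound $\ell(\coeff_\mu) \geq n - \mu_1$, I begin with hooks $\mu = (k+1, 1^{n-k-1})$. Specializing Conjecture~\ref{skewing_theorem} at $k = n-1$ yields $\coeff_{(n)} = e_{n-1}^\perp \A_n = 1$, which by Hall adjunction forces $\A_n$ to contain $s_{1^{n-1}}$ with coefficient exactly $1$: indeed, $1^{n-1}$ is the unique partition $\lambda$ for which $e_{n-1}^\perp s_\lambda$ has a non-zero constant term. For $0 \leq k \leq n-1$, the identity $e_k^\perp s_{1^{n-1}} = s_{1^{n-k-1}}$ shows that $\coeff_{(k+1, 1^{n-k-1})}$ contains a Schur function of length $n - k - 1$. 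To rule out cancellation of this summand with contributions from other $s_\lambda$ in $\A_n$, I would verify that $s_{1^{n-1}}$ is the unique Schur summand of $\A_n$ of (minimum) size $n-1$ — a fact consistent with the examples $\A_3, \A_4, \A_5$ and expected from the degree structure of $\Enk{n}{k}$. For non-hook $\mu$, the lower bound should follow by constructing, via vertex operators in the spirit of Zabrocki, an explicit $\mu$-isotypic element whose $\GL_\infty$-character carries a Schur summand of length exactly $n - \mu_1$.

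For the upper bound $\ell(\coeff_\mu) \leq n - \mu_1$ — equivalently, that the $\mu$-isotypic component of $\E_n$ already lies in $\Enk{n}{n-\mu_1}$ — the hook case reduces via Conjecture~\ref{skewing_theorem} to controlling the length profile of $\A_n$: I would show that every Schur summand $s_\lambda$ of $\A_n$ admits a ``staircase tail'' of rows of size $1$ sufficient to ensure $\ell(e_k^\perp s_\lambda) \leq n - k - 1$. For general $\mu$, the plan is a dominance-order induction: use Jacobi--Trudi or vertex-operator expansions of $s_\mu$ to express $\coeff_\mu$ in terms of $\coeff_\nu$ for $\nu$ higher in dominance (hence smaller $n - \nu_1$), and propagate the length bound inductively.

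The main obstacle will be the upper bound for non-hook $\mu$. Conjecture~\ref{skewing_theorem} provides a clean handle on hooks via $e_k^\perp$, but no analogous closed formula is available for general shapes, so a direct argument would need to show that polarization operators into row $n - \mu_1 + 1$ and beyond produce only $\S_n$-isotypic components of types strictly greater than $\mu$ in a suitable partial order — a ``first-row monotonicity'' of the polarization filtration that seems to demand genuinely new representation-theoretic input. Even in the hook case, making the ``staircase tail'' structural claim about $\A_n$ precise appears non-trivial and may require an explicit understanding of $\A_n$, which is a deep open problem in its own right.
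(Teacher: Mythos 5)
The statement you are trying to prove is labelled a conjecture in the paper (Conjecture~\ref{length_conjecture}), and the paper provides no proof of it. There is therefore no ``paper proof'' to compare against; the only thing the paper establishes in this direction is the Lemma asserting that Conjecture~\ref{Delta_Conj} implies Conjecture~\ref{length_conjecture} for partitions $\mu$ with $\mu_1=2$ --- a conditional implication, not a proof. Your proposal should be read in that light: it is an attempted proof of an open conjecture.

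On the substance of your sketch: the lower bound for hook shapes is essentially fine, \emph{granted} Conjecture~\ref{skewing_theorem} (Hook-Components) --- which is itself unproven, so your argument is conditional at best. Your worry about cancellation is moot: $\A_n$ is the $\GL_\infty$-character of an isotypic component, hence Schur-positive, and $e_k^\perp$ preserves Schur-positivity by the dual Pieri rule, so the summand $s_{1^{n-k-1}} = e_k^\perp s_{1^{n-1}}$ cannot be cancelled. Moreover the fact that $s_{1^{n-1}}$ is the unique length-$(n-1)$ summand of $\A_n$ does not need to be ``expected''; it follows directly from Schur-positivity of $\A_n$, the identity $e_{n-1}^\perp\A_n=1$, and the observation that $e_{n-1}^\perp s_\lambda$ is nonzero and Schur-positive for every $\lambda$ with $\ell(\lambda)\geq n-1$.

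The rest of the plan, however, has genuine gaps that you yourself flag. The lower bound for non-hook $\mu$ is reduced to constructing suitable isotypic elements ``via vertex operators in the spirit of Zabrocki,'' but no construction is given, and without one there is nothing to check. The upper bound in the hook case requires the ``staircase tail'' claim --- that every Schur summand $s_\lambda$ of $\A_n$ has enough trailing rows of size $1$ so that $e_k^\perp s_\lambda$ drops length by exactly $k$ --- and this is a nontrivial structural assertion about $\A_n$ that is not established (it fails for arbitrary Schur-positive symmetric functions, so it really does encode special structure). The upper bound for non-hook $\mu$ via dominance-order induction is only gestured at; Jacobi--Trudi introduces signs, so propagating a length bound through it is not automatic, and the ``first-row monotonicity of the polarization filtration'' you invoke is, as you say, new representation-theoretic input that would need to be supplied. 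In short: your outline correctly identifies the natural entry points (Hook-Components, Schur-positivity, the structure of $\A_n$), but it does not close any of the hard steps, and the hard steps are exactly why the statement remains a conjecture.
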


For $k=n-1$, this is compatible with Theorem~\ref{skewing_theorem} which implies that 
   $$\A_n=s_{1^{n-1}}+(\hbox{lower length terms}),$$ 
   so that $\A_n$ is indeed of length $n-1$.
As another example, the length of $\langle \E_n, s_{(n-1,1)}\rangle$ is conjectured to be equal to $1$, so that we get
\begin{equation}\label{coefficient_of_n1}
    \langle \E_n, s_{(n-1,1)}\rangle= \langle \nabla(e_n), s_{(n-1,1)}\rangle = s_1+s_2+\ldots + s_{n-1},
 \end{equation}
  since the second equality is well known.

\subsection{\texorpdfstring{$\A_n$}{A}-reconstruction}
Let us illustrate how, assuming the Coefficient-Length Conjecture, we may reconstruct $\A_n$. We already know that in general $\A_n=s_{1^{n-1}}+(\hbox{lower length terms})$, so that
 $e_{n-2}^\perp \A_n=s_1+(\hbox{other length $1$ terms})$.
 In fact, from~\pref{coefficient_of_n1}, we also get for all $n$ that 
   $$e_{n-2}^\perp\, \A_n=s_1+s_2+\ldots + s_{n-1},$$
which forces
\begin{equation}\label{deuxieme_approximation_An}
   \A_n=s_{1^{n-1}}+ \sum_{k=3}^{n} s_{(k,1^{n-3})} +(\hbox{terms of length $<n-2$}).
 \end{equation}
 Likewise, all terms of length $n-3$ of $\A_n$ are imposed by the identity
\begin{equation}\label{troisieme_approximation_An}
   e_{n-3}^\perp \A_n = \langle \E_n,s_{n-2,1,1}\rangle = \langle \nabla(e_n),s_{n-2,1,1}\rangle,
 \end{equation}
  which results from the assumption that $ \langle \E_n,s_{n-2,1,1}\rangle$ is of length $2$, hence it value is entirely determined by $\nabla(e_n)$.
For instance, with $n=6$, this gives 
   \begin{align*}
     \A_6&=s_{11111} + s_{3111} + s_{4111} + s_{5111} + s_{6111}\\ 
         &\qquad\qquad + s_{441} + s_{431} + s_{531} + s_{631}+ s_{421} + s_{521} + s_{621} + s_{721} + s_{821}\\
         &\qquad\qquad    + s_{611} + s_{711} + 2s_{811} + s_{911} + s_{10.11}+(\hbox{terms of length $\leq 2$}).
     \end{align*}
The remaining missing terms are thus readily calculated since they correspond exactly to the Schur expansion of $\langle\nabla(e_6),e_6\rangle$, which we give below for completeness sake.
\begin{align*}
 \langle\nabla(e_6),e_6\rangle &= s_{44} + s_{64}  + s_{74} + s_{63} + s_{73} + s_{83} + s_{93} + s_{72} + s_{82} + s_{92} + s_{10.2} + s_{11.2}\\
  &\qquad\qquad + s_{10.1} + s_{11.1} + s_{12.1} + s_{13.1} + s_{15.}
\end{align*}
Once again, let us underline that for all $n$, the value of $\nabla(e_n)$ fixes all the components of length at most $2$ in $\E_n$. 

\subsection{Hook-components reconstruction}
Now, from this explicit knowledge of $\A_n$, we may calculate all the hook-indexed components of $\E_n$. In particular, we may check that we indeed get back the expression already mentioned for  $\langle\E_6,s_{21111}\rangle$, namely
  \begin{align*}
 \langle\E_6,s_{21111}\rangle &= e_1^\perp \A_n\\
    &=\langle\nabla(e_6),s_{21111}\rangle+(s_{1111} + s_{2111} + s_{3111} + s_{4111} + s_{5111} + s_{331} + 2s_{431} + s_{531} \\
         &\qquad\qquad\qquad\qquad + s_{321} + 2s_{421} + 2s_{521} + 2s_{621} + s_{721} + s_{311} + 2s_{411} \\
         &\qquad\qquad\qquad\qquad + 3s_{511} + 3s_{611} + 3s_{711} + 2s_{811} + s_{911});
\end{align*}
as well as that for $\langle\E_6,s_{3111}\rangle$:
  \begin{align*}
 \langle\E_6,s_{3111}\rangle &= e_2^\perp \A_n\\
    &=\langle\nabla(e_6),s_{3111}\rangle+(s_{331} + s_{321} + s_{421} + s_{521} + s_{111} + s_{211} + 2s_{311} + 2s_{411}\\
         &\qquad\qquad\qquad\qquad  + 2s_{511} + s_{611} + s_{711}).
\end{align*}

\subsection{Partial reconstruction of other components} Similarly, using both the Component-Length Conjecture and the Delta-via-Skewing Conjecture (see~\ref{Delta_Conj}), we may partially reconstruct other coefficients of $\E_n$, considering that the expansion of $\Delta'_{e_k}(e_n)$ is known for all $k$ and $n$. Observe that the Component-Length Conjecture directly implies that, for all $n$,  
 \begin{equation}
    \langle \E_n,s_{n-2,2}\rangle = \langle \nabla(e_n),s_{n-2,2}\rangle
 \end{equation}
so that we already have the coefficient of $s_{n-2,2}$ fully characterized, on top of those for all hook-shapes. Since the  Delta-via-Skewing Conjecture states that
the length at most $2$ components of $\langle e_k^\perp \E_n,s_\mu\rangle $ coincide with those of $\langle\Delta'_{e_{n-1-k}} e_n,s_\mu\rangle $ for all $\mu$, it may be used to infer components of the corresponding coefficients. We may also deduce from Conjecture.\ref{Delta_Conj} part of Conjecture.\ref{length_conjecture}. For instance,  since $\Delta'_{e_1} e_n= \Delta_{e_1} e_n -1\otimes e_n$ and we have\footnote{Since it underlines Schur positivity in the parameters $q$ and $t$, this is a ``slightly'' stronger statement than that of~\cite{haglund}, but it is equivalent.} (see~\cite[Prop. 6.1]{haglund}) 
\begin{equation}\label{formula_for_delta_e1}
   \Delta_{e_1}(e_n)=\textstyle\sum_{k=1}^{n} s_{k-1}\otimes e_{n-k}e_{k},
\end{equation}
we deduce that $\langle \Delta'_{e_1} e_n,s_\mu\rangle=0$ for all partition $\mu$ having first part larger than $2$. Hence, Conjecture.\ref{Delta_Conj} implies that $\langle e_{n-3}^\perp \E_n,s_\mu\rangle=0$ when $\mu_1>2$, implying that  $\langle  \E_n,s_\mu\rangle=0$ has length at most $2$ in those cases. However, for $\mu$ such that $\mu_1=2$, Formula~\pref{formula_for_delta_e1} implies that $\langle e_{n-2}^\perp \E_n,s_{(2^k,1^{n-2k})}\rangle$ does not vanish and is of length $1$. Thus we conclude that  
\begin{lemma} 
	Conjecture.\ref{Delta_Conj} implies Conjecture.\ref{length_conjecture}, for any partition $\mu$ such that $\mu_1=2$.
\end{lemma}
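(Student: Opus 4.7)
The plan is to prove the lemma by applying Conjecture~\ref{Delta_Conj} at $(a,b)=(1,n-2)$ and extracting the $s_\mu$-coefficient of both sides, using the explicit Haglund formula $\Delta_{e_1}(e_n)=\sum_{k=1}^{n} s_{k-1}\otimes e_{n-k}e_k$ from~\pref{formula_for_delta_e1} together with the Pieri rule.

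For the lower bound $\ell(\coeff_\mu)\geq n-2$ required by Conjecture~\ref{length_conjecture}, I would first expand each $e_{n-k}e_k$ via Pieri as $\sum_{j=0}^{\min(k,n-k)} s_{(2^j,1^{n-2j})}$, so as to read off the coefficient of $s_\mu$ for $\mu=(2^j,1^{n-2j})$ with $j\geq 1$:
\[
\langle\Delta'_{e_1}(e_n),s_\mu\rangle \;=\; \sum_{k=j-1}^{n-j-1} s_k,
\]
where the correction $-1\otimes e_n$ defining $\Delta'_{e_1}$ drops out since $\mu\neq(1^n)$. This is a nonempty sum of row-Schurs. By Conjecture~\ref{Delta_Conj}, it matches the $(q,t)$-specialization of $e_{n-2}^\perp\coeff_\mu$, which is therefore nonzero; hence $e_{n-2}^\perp\coeff_\mu\neq 0$ and $\ell(\coeff_\mu)\geq n-2$.

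For the matching upper bound $\ell(\coeff_\mu)\leq n-2$, the same extraction yields more: the length-$\leq 2$ part of $e_{n-2}^\perp\coeff_\mu$ contains no length-$2$ Schur summand. Combining with Schur positivity of $\coeff_\mu$ and a brief vertical-strip analysis of $e_{n-2}^\perp s_\lambda$ for $\ell(\lambda)=n-1$ rules out the length-$(n-1)$ shapes of hook type $(\lambda_1,1^{n-2})$ with $\lambda_1\geq 2$ and of the form $(\lambda_1,\lambda_2,1^{n-3})$ with $\lambda_2\geq 2$, each of whose $e_{n-2}^\perp$-image produces a length-$2$ term. The residual cases $\lambda=(1^{n-1})$ and $\lambda$ with $\lambda_3\geq 2$ would be dispatched by further instances of the conjecture: the case $(a,b)=(0,n-1)$, with $\Delta'_{e_0}=\mathrm{Id}$ and hence $\Delta'_{e_0}(e_n)=1\otimes e_n$, excludes $\lambda=(1^{n-1})$ for $\mu\neq(1^n)$, since $e_{n-1}^\perp s_{(1^{n-1})}=1$ would otherwise appear in the length-$\leq 2$ specialization.

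The main obstacle is the exclusion of length-$(n-1)$ shapes with $\lambda_3\geq 2$: their images under $e_{n-2}^\perp$ contain no length-$\leq 2$ summand, and under $e_{n-1}^\perp$ have length $\geq 3$, so they are invisible to both the $(1,n-2)$ and $(0,n-1)$ specializations of Conjecture~\ref{Delta_Conj}. Closing this gap would require either iterating the conjecture at $(a,b)=(2,n-3)$ with an analogue of Haglund's formula~\pref{formula_for_delta_e1} for $\Delta_{e_2}(e_n)$, or invoking a complementary structural fact to the effect that top-length $\GL_\infty$-types of $\E_n$ occur only within the alternating $\S_n$-isotypic component $\A_n$. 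The lower bound, by contrast, falls out of a single evaluation of Conjecture~\ref{Delta_Conj} and is the core content of the argument.
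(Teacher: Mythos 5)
Your lower-bound argument coincides with the paper's: from $\Delta'_{e_1}(e_n)=\sum_{k=1}^n s_{k-1}\otimes e_{n-k}e_k - 1\otimes e_n$ and the Pieri expansion $e_{n-k}e_k=\sum_{m=0}^{\min(k,n-k)} s_{(2^m,1^{n-2m})}$, one gets $\langle\Delta'_{e_1}e_n,s_{(2^j,1^{n-2j})}\rangle=\sum_{i=j-1}^{n-j-1}s_i\neq 0$ for $j\geq 1$; Conjecture~\ref{Delta_Conj} at $(a,b)=(1,n-2)$ then forces $e_{n-2}^\perp\coeff_\mu\neq 0$, hence $\ell(\coeff_\mu)\geq n-2$. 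This is precisely the paper's computation (modulo an evident typo there, where $e_{n-3}^\perp$ is written for $e_{n-2}^\perp$).

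The gap you flag for the matching upper bound is genuine given only the two skewing instances you invoke --- your vertical-strip analysis correctly shows that length-$(n-1)$ shapes $\lambda$ with $\lambda_3\geq 2$ produce no length-$\leq 2$ term under either $e_{n-2}^\perp$ or $e_{n-1}^\perp$ and so are invisible to the $(q,t)$-specializations. But you do not need further instances of Conjecture~\ref{Delta_Conj} to close it. The ``complementary structural fact'' you propose (that the top-length $\GL_\infty$-types of $\E_n$ sit only in the alternating isotypic component) is exactly what equation~\pref{degre_et_longueur} already supplies: $\deg(\coeff_\mu^{(n-1)})=(n-1)-\sum_{i\in\mu}\binom{i}{2}$, which for $\mu\neq (1^n)$ is strictly less than $n-1$, so no $s_\lambda$ with $\ell(\lambda)=n-1$ (and therefore $|\lambda|\geq n-1$) can occur in $\coeff_\mu$. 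Combined with the stability bound $\ell(\coeff_\mu)\leq n-1$, this yields $\ell(\coeff_\mu)\leq n-2$ unconditionally for every $\mu$ with $\mu_1\geq 2$; the Delta-by-Skewing conjecture is needed only for the lower bound. So the single evaluation you identify as the core content does in fact complete the lemma once \pref{degre_et_longueur} is cited, and your remaining case analysis for the upper bound is work you can discard.
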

Moreover, using \pref{formula_for_delta_e1}, Conjecture.\ref{Delta_Conj} states that
  \begin{displaymath}
     \langle e_{n-2}^\perp \E_n,s_{(2^k,1^{n-2k})}\rangle = \sum_{i=k-1}^{n-k-1} s_i.
  \end{displaymath}
Since we already know that $\langle e_{n-2}^\perp \E_n,s_{(2^k,1^{n-2k})}\rangle=0$ if $k\geq 1$, the above identity forces
    \begin{equation}
     \langle  \E_n,s_{(2^k,1^{n-2k})}\rangle = \sum_{i=k-1}^{n-k-1} s_{i+1,1^{n-3}} +(\hbox{terms of length $<n-2$}).
  \end{equation}

 \section{\bleu{Structure properties of \texorpdfstring{$\E_{n}$}{f}}} 
\subsection{Hook restriction property}
Multiplicities of hook components in $\A_n$ (the alternating part of $\E_n$), are explicitly known. Indeed, considering in this section that $a$ and $b$ are such that $n=a+b+1$, using~\pref{e_skewing} we may calculate that
\begin{equation}\label{hook_skew}
    \frac{1}{q+u}\sum_{b\geq 0} u^b\,(e_b^\perp s_{(a\,|\,b)})(q)=\frac{s_{(a\,|\,b)}[q-\varepsilon u]}{q+u}= q^au^b;
 \end{equation}
and that $s_\mu[q-\varepsilon u] =0$ whenever $\mu$ is not a hook.
Hence, for any symmetric function $f$ in $\Lambda_n$, 
 \begin{equation}\label{generating_hook}
   \frac{f[q-\varepsilon u]}{q+u}=\sum_{a,b} \scalar{f}{s_{(a\,|\, b)}}\, q^au^b
\end{equation}
may be considered as a generating function for the multiplicities of hook components in $f$.
In the case of $f=\A_n$, the multiplicities are thus readily calculated in view of the following identity, which follows from~\pref{hook_skew}.
 \begin{prop}
 For all $n$, we have
\begin{equation}\label{formula_hooks_en}
   \A_n[q-\varepsilon u]=(q+u)(q^2+u)\cdots (q^{n-1}+u).
 \end{equation}
 \end{prop}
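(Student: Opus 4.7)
The plan is to identify the hook-component multiplicities of $\A_n$ with the coefficients of $\prod_{k=1}^{n-1}(q^k+u)$, reducing the statement to the classical graded Frobenius characteristic of the $\S_n$-coinvariant space $\Enk{n}{1}$. First, the vanishing $s_\nu[-\varepsilon u]=0$ unless $\nu=(1^k)$ together with the values $s_{(1^k)}[-\varepsilon u]=u^k$ turns the plethystic addition formula $f[X+Y]=\sum_\nu s_\nu[Y]\,s_\nu^\perp f[X]$ into the identity
\begin{equation*}
f[q-\varepsilon u]=\sum_{k\geq 0}u^k\,(e_k^\perp f)(q),
\end{equation*}
valid for any symmetric function $f$. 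Applying this to $f=\A_n$ and comparing with the right-hand side via $\prod_{j=1}^{n-1}(q^j+u)=\sum_{k}u^k\,e_{n-1-k}(q,q^2,\ldots,q^{n-1})$, the proposition reduces to proving the one-variable identities $(e_k^\perp\A_n)(q)=e_{n-1-k}(q,q^2,\ldots,q^{n-1})$ for every $0\leq k\leq n-1$.

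The key step then invokes the Hook-Components Conjecture~\ref{skewing_theorem}, which identifies $e_k^\perp\A_n$ with $\langle\E_n,s_{(k+1,1^{n-k-1})}\rangle$. Since specializing $\bm{q}$ to the single variable $q$ sends $\E_n(\bm{q};\bm{z})$ to the graded Frobenius characteristic of the classical coinvariant space $\Enk{n}{1}$, we obtain $(e_k^\perp\A_n)(q)=f^\mu_q=\sum_{T\in\mathrm{SYT}(\mu)}q^{\maj(T)}$ for the hook $\mu=(k+1,1^{n-k-1})$. A short descent analysis---each hook SYT is determined by the set $S\subseteq\{2,\ldots,n\}$ of its first-row entries other than $1$, and position $i$ is a descent exactly when $i+1$ lies in the leg---then shows that
\begin{equation*}
f^\mu_q=\sum_{\substack{T\subseteq\{1,\ldots,n-1\}\\|T|=n-1-k}}q^{\sum T}=e_{n-1-k}(q,q^2,\ldots,q^{n-1}),
\end{equation*}
which is the required identity.

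The principal obstacle is the second step, which rests on the Hook-Components Conjecture; the remainder is routine plethystic bookkeeping together with the standard hook-shape instance of the fake-degree (or maj) identity for the classical $\S_n$-coinvariants. An unconditional route would instead have to compute the hook multiplicities of $\A_n$ intrinsically, for instance from its description as the polarization orbit of the Vandermonde $\Vdm_n$; the staircase exponents $\{1,2,\ldots,n-1\}$ would then enter naturally, and the product $\prod(q^k+u)$ should emerge from a combinatorial bijection with strict subsets of $\{2,\ldots,n-1\}$.
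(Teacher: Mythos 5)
Your proof takes essentially the same route as the paper's: both reduce via $f[q-\varepsilon u]=\sum_{k\geq 0} u^k\,(e_k^\perp f)(q)$ to the one-variable computation of $(e_k^\perp\A_n)(q)$, pass to $\scalar{\MacH_n(q;\bm{z})}{s_{(k+1,1^{n-k-1})}(\bm{z})}$ by specializing the hook-skewing relation $e_k^\perp\A_n=\langle\E_n,s_{(k+1,1^{n-k-1})}\rangle$ at $\bm{q}=(q,0,0,\ldots)$, and then sum the resulting coefficients into $\prod_{i=1}^{n-1}(q^i+u)$. Your SYT/descent derivation is simply an explicit unpacking of the identity $\scalar{\MacH_n}{s_{(a\,|\,b)}}=q^{\binom{b+1}{2}}\qbinom{n-1}{b}=e_b(q,q^2,\ldots,q^{n-1})$ that the paper quotes directly via~\pref{Hn_binom}, and your remark that the key step rests on the Hook-Components relation is accurate --- the paper's proof invokes exactly that relation at the corresponding point, so the argument is conditional on it in both presentations.
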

\begin{proof}[\bf Proof] 
We first recall identity~\pref{Hn_binom}, which may be formulated as
$$\scalar{\E_n(q;\bm{z})}{s_{(a\,|\,b)}(\bm{z})} =  q^{\binom{b+1}{2}}\qbinom{n-1}{b}.$$
Observe that~\pref{hook_skew} implies that $(e_a^\perp  \A_n)(q)=\scalar{\E_n(q;\bm{z})}{s_{(a\,|\,b)}(\bm{z})}$, hence using~\pref{e_skewing} we calculate that
\begin{align*}
   \A_n[q-\varepsilon u]&=\sum_{a+b=n-1} \scalar{\E_n(q;\bm{z})} { s_{(a\,|\,b)}(\bm{z})}\,u^a\\
         &=\sum_{a+b=n-1} q^{\binom{b+1}{2}}\qbinom{n-1}{b} u^a\\
         &=(q+u)(q^2+u)\cdots (q^{n-1}+u).
 \end{align*}
\end{proof}
As another indication of the appropriateness of formula~\pref{formula_hooks_en}, we may directly check that its specialization at $u=0$ does indeed correspond to the known expression for the hook components of $\Enk{n}{1}$. Indeed, the expression
\begin{align*}
\sum_{k=0}^{n-1} \coeff_{(n-k,1^k)}[q]\,z^k
	&= \sum_{k=0}^{n-1} q^{\binom{k+1}{2}} \qbinom{n-1}{k}\,z^k,\\
	&= \textstyle \prod_{i=0}^{n-1} (1+q^i\,z),
\end{align*}
may easily be seen to follow from formula~\pref{Hn_binom}, since $\E_n(q;\bm{z})=\MacH_n(q;\bm{z})$. 
 

\subsection{Conjectured formula for the hook part multiplicities in other hook components} Our calculations suggest that there is a simple elegant expression for the multiplicity $ \scalar{ s_\lambda}{\coeff_\mu} $ of  $s_\lambda\otimes s_\mu$ in $\E_{n}$, when both $\lambda=(i\,|\,j)$ and $\mu=(a\,|\,b)$ are hook shapes. Indeed, we 
conjecture the following formula for the corresponding generating function via the encoding~\pref{generating_hook}.
\begin{conjecture}\label{Hook_Conj}
For all $a+b+1 =n$, we have
\begin{equation}\label{formula_hooks_n}
       \frac{ \coeff_{(a\,|\,b)}[q-\varepsilon u]}{q+u}= \qbinom{n-1}{b}\,  (q^2+u)\cdots (q^b+u),
    \end{equation}
  using the Gaussian $q$-binomial notation.
    \end{conjecture}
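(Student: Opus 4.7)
Assuming the Hook-Components Conjecture (Conjecture~\ref{skewing_theorem}), which identifies $\coeff_{(a|b)}$ with $e_a^\perp \A_n$ for $a+b+1=n$, the target identity is equivalent to $(e_a^\perp \A_n)[q-\varepsilon u]=\qbinom{n-1}{b}\prod_{k=1}^{b}(q^k+u)$. Using the standard plethystic identity $f[X-\varepsilon y]=\sum_k y^k(e_k^\perp f)[X]$ and packaging all $a$ into a single generating function, this is in turn equivalent to the three-letter evaluation
\[
\A_n[q-\varepsilon u-\varepsilon y]\;=\;\sum_{a+b+1=n}\qbinom{n-1}{b}\,y^a\prod_{k=1}^{b}(q^k+u).
\]
The specializations $u=0$ and $y=0$ both reduce to the proven formula $\A_n[q-\varepsilon w]=\prod_{k=1}^{n-1}(q^k+w)$ via the $q$-binomial theorem, giving a consistency check and locating the target as a two-parameter refinement of that identity.

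To prove the three-letter formula I would expand $\A_n=\sum_\lambda c_\lambda s_\lambda$ and evaluate each $s_\lambda$ on the super-alphabet $\{q,-\varepsilon u,-\varepsilon y\}$ by Jacobi-Trudi. By Sergeev's criterion, $s_\lambda[q-\varepsilon u-\varepsilon y]=0$ unless $\lambda_2\le 2$, and a direct computation gives the closed forms $s_{(a|b)}[q-\varepsilon u-\varepsilon y]=q^{a-1}(q+u)(q+y)\,h_b(u,y)$ for hooks with $a\ge 1$, $s_{(0|b)}[q-\varepsilon u-\varepsilon y]=h_{b+1}(u,y)+q\,h_b(u,y)$, $s_{(p,2)}[q-\varepsilon u-\varepsilon y]=uy\,q^{p-2}(q+u)(q+y)$, together with analogous formulas for the longer shapes $(p,2,1^r)$, $(p,2,2,1^r)$ with $\lambda_2=2$. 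Inserting the hook multiplicities of $\A_n$ furnished by the preceding Proposition---namely $\scalar{\A_n}{s_{(a|b)}}=[q^au^b]\prod_{k=2}^{n-1}(q^k+u)$---into the hook contribution and applying a $q$-Chu--Vandermonde rearrangement should reproduce the principal hook-generating piece of the conjectured right-hand side.

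The main obstacle is that the multiplicities $\scalar{\A_n}{s_\lambda}$ for the remaining non-hook shapes with $\lambda_2\le 2$ are not captured by the two-letter formula $\A_n[q-\varepsilon u]$ and must come from external input. Two candidate sources are the Delta-by-Skewing Conjecture~\ref{Delta_Conj}, which via the relation $\Delta'_{e_a}e_n=(e_b^\perp\E_n)(q,t;\bm{z})$ and Haglund-Remmel-Wilson-style combinatorics would determine the needed Schur components of $\coeff_{(a|b)}$ and hence (by Hook-Components) of $\A_n$; and the vertex-operator formalism of Zabrocki alluded to in the introduction, which governs transitions between the $\S_n$-isotypic components of $\E_n$ and could plausibly yield the two-column non-hook part of $\A_n$ in closed form. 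Closing this gap---accounting for the $s_{(p,2,\ldots)}$-type components of $\A_n$---is the essential step that remains, while all other pieces of the argument are reductions to the two-letter formula via plethystic and Jacobi-Trudi manipulations.
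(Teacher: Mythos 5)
The statement you are working on is presented in the paper as a \emph{conjecture}, and the paper contains no proof of it; the only handle the paper offers is the remark that Conjecture~\ref{Delta_Conj} implies Conjecture~\ref{Hook_Conj} (attributed to Wallace,~\cite{wallace}). So there is no ``paper proof'' to compare against, and any complete argument would be a genuinely new result. Your reduction is nevertheless sound as a conditional reformulation: assuming Conjecture~\ref{skewing_theorem}, identity~\pref{formula_hooks_n} is equivalent to the three-letter super-evaluation
\begin{equation*}
\A_n[q-\varepsilon u-\varepsilon y]\;=\;\sum_{a+b+1=n}\qbinom{n-1}{b}\,y^a\prod_{k=1}^{b}(q^k+u),
\end{equation*}
and your consistency checks at $u=0$ and $y=0$ (both collapsing to the product formula $\prod_{k=1}^{n-1}(q^k+w)$ via the $q$-binomial theorem), as well as your closed forms for $s_{(a|b)}[q-\varepsilon u-\varepsilon y]$, are correct.

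However, as you yourself note, the essential ingredient is missing, and that gap is real. The proven two-letter identity $\A_n[q-\varepsilon u]=\prod_{k=1}^{n-1}(q^k+u)$ kills every non-hook Schur term of $\A_n$ by the supersymmetric vanishing, so it determines only the hook multiplicities $\scalar{\A_n}{s_{(a|b)}}$. The three-letter evaluation you need additionally requires the multiplicities $\scalar{\A_n}{s_\lambda}$ for the non-hook shapes with $\lambda_2\le 2$ (the $(p,2,1^r)$ and $(p,2,2,1^r)$ families), and no statement in the paper --- nor the Proposition giving $\A_n[q-\varepsilon u]$ --- supplies this data. Thus what you have is an equivalent, arguably more transparent, restatement of Conjecture~\ref{Hook_Conj}, not a proof of it. If you want a conditional argument in the spirit of the paper, the shorter route is the one the paper records: take Conjecture~\ref{Delta_Conj} as hypothesis and invoke Wallace's implication, rather than attempting to reconstruct the unknown two-column part of $\A_n$ from first principles.
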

\noindent For example, the size collected restriction to hook shapes of $\langle \E_5,s_{(1\,|\,3)}\rangle$ is
\begin{align*}
  \coeff_{(1\,|\,3)}\big|_{\rm hooks} &=s_{111} + (s_{31}+ s_{211} )  + (2s_{41} + s_{311}) + (s_{6}  + 2s_{51}+ s_{411})\\
     &\qquad  + (s_{7}  + 2s_{61}) + (s_{8}  + s_{71}) + s_{9},
 \end{align*}
and the corresponding polynomial is
\begin{align*}
  \frac{ \coeff_{(1\,|\,3)}[q-\varepsilon u]}{q+u} 
  &=    \qbinom{4}{3}\,  (q^2+u) (q^3+u) \\[4pt]
   &=  (q+1)(q^2+1) (q^2+u)(q^3+u) \\[4pt]
   &=  u^{2}+ (q^{2} u+ qu^{2} ) + (2 \,q^{3} u  +  q^{3} u^{2}) + (q^{5} + 2\,q^{4} u +   q^{3} u^{2})\\
   &\qquad    + (q^{6} +2\, q^{5} u )+ (q^{7} + q^{6}u)     + q^{8} .
 \end{align*}

\subsection{Length components}\label{section_length}
Define the \define{degree} of 
   $$\sum_{\lambda,\mu} c_{\lambda,\mu} s_\lambda\otimes s_\mu$$ 
 to be the maximum of the values $|\lambda|$, for which $c_{\lambda,\mu}\not=0$.
 The \define{length $d$ component} of $\E_{n}$ is set to be
\begin{equation}
     \E_{n}^{(d)}:=\sum_{\mu\vdash n} \coeff_{\mu}^{(d)}\otimes s_\mu,\qquad {\rm with}\qquad 
     	\coeff_{\mu}^{(d)}=\sum_{\ell(\lambda)=d} c_{\lambda,\mu} s_\lambda.
 \end{equation}
We clearly have
  \begin{equation}
     \E_{n}:=\E_{n}^{(0)}+\E_{n}^{(1)}+\ldots \E_{n}^{(l)},
 \end{equation}
 where $l=\ell(\E_{n})$ is the maximal length occurring in terms of $\E_{n}$. With this notation, Conjecture.\ref{length_conjecture} states that $\ell(\coeff_{\mu})=|\mu|-\mu_1$, and it may be shown that
\begin{equation}\label{degre_et_longueur}
   \deg(\coeff_\mu^{(j)})=\binom{n}{2}-\binom{j}{2}-\sum_{i\in\mu} \binom{i}{2}.
\end{equation}
Let $\Enk{n}{k}$ (similarly for $\Cmuk{\mu}{k}$) stand for the restriction of $\E_{n}$  to its components of length at most $k$. In other terms,  
\begin{equation}
     \Enk{n}{k}:=\E_{n}^{(0)}+\E_{n}^{(1)}+\ldots \E_{n}^{(k)}.
 \end{equation}
For example, 
\begin{align*}
\E_{4}^{(0)}&=1 \otimes s_{4}, \\
\E_{4}^{(1)}&= (s_{1} + s_{2}+ s_{3}) \otimes s_{31} + (s_{2} + s_{4}) \otimes s_{22} + (  s_{3} + s_{4} + s_{5} )\otimes s_{211}  +  s_{6}) \otimes s_{1111,}\\
\E_{4}^{(2)}&= s_{21}\otimes s_{22} + (s_{11} + s_{21} + s_{31} )\otimes s_{211}  + (s_{31}    + s_{41}) \otimes s_{1111},\\
\E_{4}^{(3)}&=s_{111}\otimes s_{1111}.
\end{align*}
Simple general values for length components are:
  \begin{equation}
    \E_{n}^{(0)}=1\otimes s_n,\qquad {\rm and}\qquad \E_{n}^{(n-1)}=e_{n-1}\otimes e_n.
   \end{equation}
The following \define{reduced-length components} can be efficiently used to reconstruct  (part of)  $\E_n$. We set
\begin{equation}\label{reduced_length}
   \varepsilon_n^{(k)}:=\eperp{k}\,\E_n^{(k)},\qquad {\rm (and}\qquad \alpha_n^{(k)}:=e_k^\perp\A_n^{(k)}).
 \end{equation}
For example,
 \begin{align*}
\varepsilon^{(0)}_{4}&= 1 \otimes s_{4},\\
\varepsilon^{(1)}_{4}&=(1 + s_{1} + s_{2})\otimes s_{31}  + (s_{1}+ s_{3}) \otimes s_{22} + (s_{2} + s_{3} + s_{4}) \otimes s_{211} + s_{5} \otimes s_{1111},\\
\varepsilon^{(2)}_{4}&= (1  + s_{1} + s_{2}) \otimes s_{211}  + s_{1} \otimes s_{22} + (s_{2} + s_{3}) \otimes s_{1111},\\
\varepsilon^{(3)}_{4}&=1 \otimes s_{1111}.
\end{align*}

\subsection{Description of  hook components in terms of  \texorpdfstring{$\A_n$}{C}}
From~\pref{hook_skew} we may calculate calculate all the hook components $\coeff_\mu=\langle \E_n,s_{\mu} \rangle$ directly from the alternant component $\A_n=\langle \E_n,s_{1^n} \rangle$, both considered as $\GL_\infty$-character of isotypic components of $\E_n$.
We will see that this implies that we can reconstruct $\A_n$ from much less information than is apparently needed at {\sl prima facie}. 
From now on, let us use \define{Frobenius's notation} for hook shape partitions, writing  $(a\,|\,b)$ for the partition $(a+1,1^b)$ of
$n=a+b+1$ (see Figure~\ref{Fig1}). It is often said that $a$ stands for the \define{arm} of the hook, while $b$ stands for its \define{leg}. We use the Cartesian (aka French) convention to draw diagrams, so that the leg goes up.
 \begin{figure}[ht]
\begin{center}
 \begin{tikzpicture}[thick,scale=.4]
 \node at (-1.2,2.5) {$b\left\{\rule{0cm}{24pt}\right.$};
 \node at (4.5,1.3) {$\overbrace{\hskip3.1cm}^{\textstyle a}$};
\carrevert{0}{4}
\carrevert{0}{3}
\carrevert{0}{2}
\carrevert{0}{1}
\carrebleu{0}{0}
\carrejaune{1}{0}\carrejaune{2}{0}\carrejaune{3}{0}\carrejaune{4}{0}\carrejaune{5}{0}\carrejaune{6}{0}\carrejaune{7}{0}\carrejaune{8}{0}
\end{tikzpicture}
\end{center}
\qquad  \vskip-15pt
\caption{The hook shape $(a\,|\,b)$.}\label{Fig1}
\label{Fig_hook_shape} 
\end{figure}
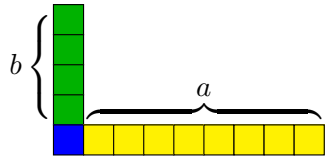


 \section{\bleu{Link to Delta operators, and the Delta-conjecture}}

\subsection{Skewing versus \texorpdfstring{$\Delta$}{D}-operators}  The ``Delta-Conjecture'' of~\cite{haglund} (see~\pref{delta_conjecture}) proposes an explicit combinatorial description for the evaluation at $e_n$ of the Macdonald ``eigenoperator'' $\Delta'_{e_k}$. The  operators $\Delta'_{e_k}$ were originally introduced in~\cite{identity}, up to a slight shift in eigenvalues.   Our discussion below leads to the conclusion that, for each $\mu$, the coefficient $\scalarstar{ \Delta'_{e_k}(e_n)}{s_\mu}$ should be Schur positive. Hence the same clearly holds true for $\Delta_{e_k}(e_n)$. This is a slightly stronger positivity statement than that of the original Delta-Conjecture. The special case $\Delta_{e_2}(e_n)$ is considered in~\cite{Qiu}, not referring to a Schur expansion but proving an equivalent result. 
 
Conjecture~\ref{Delta_Conj} states that $\Delta'_{e_b}(e_n)$ may be obtained simply by skewing first components in $\E_{n}$ by the elementary symmetric function $e_a$, for $a+b+1=n$. Hence, we may consider  $\eperp{a}\,\E_n$ as a multivariate analogue of $\Delta'_{e_b}(e_n)$. 
Once again we emphasize that the above conjecture could not have been stated in the original restricted context of two sets of variables, namely inside $\Rnk{n}{2}$, in which the $\Delta_{e_k}$ operators are usually considered. 
It has recently been shown (see~\cite{wallace}) that Conjecture~\ref{Delta_Conj} implies Conjecture~\ref{Hook_Conj}. 

It also follows, together with~\pref{hook_skew}, that we have the equality
\begin{equation}\label{Delta_hooks}
   \scalarstar{ \Delta'_{e_b} e_n}{s_{(c|\,d)}} =  \scalarstar{ \Delta'_{e_d} e_n}{s_{(a\,|\,b)}},
\end{equation}
for all $a+b=c+d=n-1$. This follows directly using known identities involving Macdonald polynomials, thus giving indirect support to~\pref{calculation_for_hooks}. Keeping the same convention for $j$ and $k$, we have
 \begin{equation}
(\eperp{j}\,\mathcal{E}_n)(q,1/q;\mathbf{x}) = \sum_{j=0}^k (-1)^{k-j} \frac{e_{j}[[n]_q]}{q^{j\,(n-1)}} \,\frac{e_n[[j+1]_q\,\mathbf{x}]}{[j+1]_q}
\end{equation}


\subsection{Inferred components of \texorpdfstring{$\E_n$}{E}} Just as before, we consider the decomposition of $\E_n$ into its length components $ \E_n^{(k)}$,
 and the associate reduced-length  components $\varepsilon_n^{(j)}$ (see~\pref{reduced_length}). 
 One may see that the length of $\varepsilon_n^{(j)}$ (that is the maximal length of one of its $\GL_\infty$-coefficients) is equal to $\min(j,n-1-j)$.

Together with this length bound, Conjecture~\ref{Delta_Conj} implies that we may calculate the various $\varepsilon_n^{(j)}$, for all $0\leq j<n\leq 6$, just from the knowledge of the length $2$ expressions $\Delta'_{e_k} e_n$. As a matter of fact, for all $n$, 
we have the general formulas:
\begin{align}
    &\varepsilon_n^{(0)}=1 \otimes s_{n},\\ 
    &  \varepsilon_n^{(1)}=\eperp{1}\,\MacH_n, \\
    &  \varepsilon_n^{(2)}=\eperp{2}\,\nabla(e_n), \\
    &\textstyle   \varepsilon_n^{(n-3)}=\Delta'_{e_{2}}(e_n)  +(s_1+s_2)\otimes e_n +(1\otimes e_{n-1}e_1)- \sum_{k=1}^n s_{k-1}\otimes e_k e_{n-k}.\\
    &\textstyle \varepsilon_n^{(n-2)}=\sum_{k=1}^n s_{k-1}\otimes\,e_{k}e_{n-k} -(1+s_1)\otimes e_n\\
      &\varepsilon_n^{(n-1)}= 1\otimes e_n.
\end{align}


\subsection{Partial reconstruction of other components} Using both the Component-Length Conjecture and the Delta-by-Skewing Conjecture (see~\ref{Delta_Conj}), we may partially reconstruct coefficients $\coeff_\mu$ (of $\E_n$) for $\mu$'s that are not hooks, considering that the expansion of $\Delta'_{e_k}(e_n)$ is known for all $k$ and $n$. Observe that the Component-Length Conjecture directly implies that, for all $n$,  
 \begin{equation}
    \scalarstar{ \E_n}{s_{n-2,2}} = \scalarstar{ \nabla(e_n)}{s_{n-2,2}},
 \end{equation}
so that we already have the coefficient of $s_{n-2,2}$ fully characterized, on top of those for all hook-shapes. Since the  Delta-by-Skewing Conjecture states that
the length at most $2$ components of $\scalarstar{ e_k^\perp \E_n}{s_\mu} $ coincide with those of $\scalarstar{\Delta'_{e_{n-1-k}} e_n}{s_\mu} $ for all $\mu$, it may be used to infer components of the corresponding coefficients. We may also deduce from Conjecture~\ref{Delta_Conj} part of Conjecture~\ref{length_conjecture}. For instance,  since $\Delta'_{e_1} e_n= \Delta_{e_1} e_n -1\otimes e_n$ and we have (see~\cite[Prop. 6.1]{haglund}) 
\begin{equation}\label{formula_for_delta_e1}
   \Delta_{e_1}(e_n)=\textstyle\sum_{k=1}^{n} s_{k-1}\otimes e_{n-k}e_{k},
\end{equation}
we deduce that $\scalarstar{ \Delta'_{e_1} e_n}{s_\mu}=0$ for all partition $\mu$ having first part larger than $2$. Hence, Conjecture~\ref{Delta_Conj} implies that $\scalarstar{ e_{n-3}^\perp \E_n}{s_\mu}=0$ when $\mu_1>2$, implying that  $\scalarstar{  \E_n}{s_\mu}=0$ has length at most $2$ in those cases. However, for $\mu$ such that $\mu_1=2$, Formula~\pref{formula_for_delta_e1} implies that $\scalarstar{ e_{n-2}^\perp \E_n}{s_{(2^k,1^{n-2k})}}$ does not vanish and is of length $1$. Thus we conclude that  
\begin{lemma} 
	Conjecture~\ref{Delta_Conj} implies Conjecture~\ref{length_conjecture}, for any partition $\mu$ such that $\mu_1=2$.
\end{lemma}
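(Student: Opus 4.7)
The plan is to verify Conjecture~\ref{length_conjecture} for a partition $\mu$ of $n$ with $\mu_1=2$ by establishing the two-sided bound $n-2\leq \ell(\coeff_\mu)\leq n-2$. Such a $\mu$ is necessarily of the form $(2^j,1^{n-2j})$ for some $j$ with $1\leq j \leq n/2$, so only this family needs attention. The lower bound $\ell(\coeff_\mu)\geq n-2$ will flow from Conjecture~\ref{Delta_Conj} at $a=1$, $b=n-2$, combined with Haglund's expansion~\pref{formula_for_delta_e1} of $\Delta_{e_1}(e_n)$; the upper bound $\ell(\coeff_\mu)\leq n-2$ will come for free from the structural identity $\E_n^{(n-1)}=e_{n-1}\otimes e_n$ recorded in Section~\ref{section_length}, which pins down the sole isotypic type that can carry any length-$(n-1)$ $\GL_\infty$-content in $\E_n$.

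For the lower bound, I would specialize Conjecture~\ref{Delta_Conj} to $a=1$, obtaining $(\eperp{n-2}\E_n)(q,t;\bm{z})=\Delta'_{e_1}(e_n)=\Delta_{e_1}(e_n)-1\otimes e_n$. The Pieri rule yields $e_{n-k}\,e_k=\sum_{j=0}^{\min(k,n-k)} s_{(2^j,1^{n-2j})}$, so for $j\geq 1$ extracting the coefficient of $s_{(2^j,1^{n-2j})}$ in~\pref{formula_for_delta_e1} gives
\begin{equation*}
\scalarstar{\Delta'_{e_1}(e_n)}{s_{(2^j,1^{n-2j})}}=\sum_{k=j}^{n-j} s_{k-1},
\end{equation*}
which is a nonzero symmetric polynomial in $(q,t)$. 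Hence the $(q,t)$-evaluation of $\eperp{n-2}\coeff_\mu$ is nonzero, so $\eperp{n-2}\coeff_\mu\neq 0$ in $\Lambda$. Because $\coeff_\mu$ is Schur-positive (it is the $\GL_\infty$-character of an isotypic component of the polynomial $\GL_\infty$-module $\E_n$) and because $e_{n-2}^\perp s_\lambda=0$ whenever $\ell(\lambda)<n-2$, the support of $\coeff_\mu$ must contain some $\lambda$ with $\ell(\lambda)\geq n-2$. Thus $\ell(\coeff_\mu)\geq n-2$.

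For the matching upper bound, the identity $\E_n^{(n-1)}=e_{n-1}\otimes e_n=s_{1^{n-1}}\otimes s_{1^n}$ says that the only $\mu$ whose coefficient carries any length-$(n-1)$ Schur term is $\mu=1^n$. Combined with the general bound $\ell(\lambda)\leq n-1$ throughout the support of $\E_n$ (noted right after~\pref{filtration} using stabilization at $k=n-1$), this forces $\ell(\coeff_\mu)\leq n-2$ for every $\mu\neq 1^n$, and in particular for our $(2^j,1^{n-2j})$ with $j\geq 1$. Combining the two estimates yields $\ell(\coeff_\mu)=n-2=n-\mu_1$, which is the statement of Conjecture~\ref{length_conjecture} in this range.

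The subtle point, rather than a genuine obstacle, is the passage from non-vanishing after the $(q,t)$-specialization back to non-vanishing of the underlying $\GL_\infty$-symmetric function and then to the existence of a length-$\geq (n-2)$ Schur component. This step rests squarely on Schur-positivity of $\coeff_\mu$: once the $(q,t)$-value of $\eperp{n-2}\coeff_\mu$ is nonzero, the symmetric function itself is nonzero, and Schur-positivity prevents cancellations in the skewed expansion $\sum_\lambda c_{\lambda,\mu}\,e_{n-2}^\perp s_\lambda$ from faking a nonzero value without an honest contribution from some $\lambda$ with $\ell(\lambda)\geq n-2$. Beyond this observation, the proof reduces to the elementary Pieri computation for $e_{n-k}\,e_k$ on hook-column shapes and a direct appeal to $\E_n^{(n-1)}=e_{n-1}\otimes e_n$.
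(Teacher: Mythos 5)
Your argument is correct, and its nontrivial half (the lower bound $\ell(\coeff_\mu)\geq n-2$) follows exactly the paper's route: specialize Conjecture~\ref{Delta_Conj} at $a=1$, expand $\Delta_{e_1}(e_n)$ via Haglund's Proposition~6.1, apply the Pieri rule $e_{n-k}e_k=\sum_j s_{(2^j,1^{n-2j})}$ to obtain $\scalarstar{\Delta'_{e_1}e_n}{s_{(2^j,1^{n-2j})}}=\sum_{k=j}^{n-j}s_{k-1}\neq 0$, and use that $e_{n-2}^\perp s_\lambda=0$ whenever $\ell(\lambda)<n-2$ to deduce that $\coeff_\mu$ must carry a Schur term of length $\geq n-2$. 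Your explicit upper bound $\ell(\coeff_\mu)\leq n-2$ via $\E_n^{(n-1)}=e_{n-1}\otimes e_n$ together with the stability bound $\ell(\lambda)\leq n-1$ is a welcome completion of the argument, which the paper only asserts informally. One small correction of emphasis: Schur-positivity of $\coeff_\mu$ is not actually needed at the step you flag as the subtle point, since once $e_{n-2}^\perp\coeff_\mu\neq 0$, linearity alone (every summand $c_{\lambda,\mu}\,e_{n-2}^\perp s_\lambda$ with $\ell(\lambda)<n-2$ vanishes identically, so a nonzero sum forces a surviving $\lambda$ of length $\geq n-2$) gives the conclusion without any appeal to positivity.
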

Moreover, using \pref{formula_for_delta_e1}, Conjecture~\ref{Delta_Conj} states that
  \begin{displaymath}
     \scalarstar{ e_{n-2}^\perp \E_n}{s_{(2^k,1^{n-2k})}} = \sum_{i=k-1}^{n-k-1} s_i.
  \end{displaymath}
Since we already know that $\scalarstar{ e_{n-2}^\perp \E_n}{s_{(2^k,1^{n-2k})}}=0$ if $k\geq 1$, the above identity forces
    \begin{equation}
     \scalarstar{  \E_n}{s_{(2^k,1^{n-2k})}} = \sum_{i=k-1}^{n-k-1} s_{i+1,1^{n-3}} +(\hbox{terms of length $<n-2$}).
  \end{equation}

\section{\bleu{The \texorpdfstring{$e$}{e}-positivity phenomenon}} As discussed in~\cite{open}, most of the symmetric functions constructed via the elliptic Hall algebra approach, including $\nabla(e_n)$,  seem to become $e$-positive when specialized at $t=1$.
 We discuss here an analogous situation for $\E_n$, which corresponds to a multi-parameter ``lift'' of $\nabla(e_n)$. For this we consider the specialization of any one of the parameters $q_i$ to the value $1$. In our context of infinitely many parameters, this is handily realized by the plethystic evaluation at $1+\bm{q}$ of the $\GL_\infty$-coefficients $\coeff_{\mu}$ of $\E_{n}$. It is noteworthy that this evaluation is an invertible operation.
 
 For the sake of discussion, let us set $ \F_{n}:= \E_{n}[1+\bm{q};\bm{z}]$,
and write
\begin{align}
    \F_{n}&=\sum_{\mu\vdash n}  \coeff_{\mu}[1+\bm{q}]\otimes s_\mu(\bm{z}) =\sum_{\nu\vdash n} \coeffprime_{\nu}\otimes e_\nu(\bm{z}) \label{e_expression},
\end{align}
so that $\coeffprime_{\nu}$ stands for the coefficients of $e_\nu(\bm{z})$ in $\F_{n}$. Using our scalar product convention, we may also write $\coeffprime_{\nu}= \scalarstar{ \F_{n}}{f_\nu}$ (since the forgotten symmetric functions $f_\nu$ are dual to the $e_\nu$). Then, as far as we can check experimentally, all of the $\coeffprime_{\nu}$ are  \define{Schur positive}.
   For instance, we have
 \begin{align*}
     \F_{4}&=
     	1\otimes e_{1111}+
	(3s_{1} + 2s_{2} + s_{3})\otimes e_{211}+
	(s_{11} + s_{2} + s_{21} + s_{4})\otimes e_{22}\\
& \qquad	+
	(2s_{11} + s_{21} + 2s_{3} + s_{31} + s_{4} + s_{5})\otimes e_{31}+
	(s_{111} + s_{31} + s_{41} + s_{6})\otimes e_{4}.
\end{align*}
  The $\coeff_{\nu}$ are related to the $\coeffprime_{\mu}$ by the formula $ \coeff_{\mu}[1+\bm{q}]=\sum_{\nu} K_{\mu'\lambda}\coeffprime_{\lambda}$,
  where  $K_{\mu\lambda}$ stands for  the usual Kostka number. 
There is a close tie between this $e$-positivity phenomenon, our main theorem~\ref{mainthm}, and related conjectures. To see this, we recall that  the coefficient of $e_n$ in the $e$-expansion of $s_\mu$ vanishes for all $\mu$ except hooks. For hooks, we have $\scalar{ s_{(a\,|\,b)}}{f_{(n)}} =(-1)^a$. We may then calculate that
\begin{align*}\coeffprime_{(n)}&= \scalarstar{ \E_{n}[1+\bm{q};\bm{z}]}{f_{(n)}}=\sum_{\mu\vdash n}  \coeff_{\mu}[1+\bm{q}]\,  \scalar{ s_\mu}{f_{(n)}}\\
   &=\Big(\sum_{a=0}^{n-1}  (-1)^a\, \coeff_{(a\,|\,b)}\Big)[1+\bm{q}] =\Big(\sum_{k\geq 0}  (-1)^a\, e_a^\perp \A_{n}\Big)[1+\bm{q}] .  
\end{align*}
If we recall that   $f[\bm{q}-1] =\sum_{a\geq  0} (-1)^a\,e_a^{\perp}f(\bm{q})$,
we may conclude the above calculation to get
  \begin{equation}
     \coeffprime_{(n)} = (\A_{n}[\bm{q}-1])[1+\bm{q}]=\A_{n},
  \end{equation}
which is Schur positive.
To get other similar results, let $\mu$ be any partition of $n$ which is largest in dominance order among those such that $\coeff_{\mu}\not=0$. Then, it is easy to see that 
 $\coeffprime_{(n)}=\coeff_{1^n}$, and $\coeffprime_{\mu'}=\coeff_{\mu}[1+\bm{q}]$. 
 In particular, $\coeffprime_{(1^n)}=1$.
We thus automatically have  Schur positivity in the these cases.
Experiments suggest that whenever   $a\geq 1$, we have
   \begin{equation}
      \coeffprime_{(a\,|\,b)} = \sum_{j=0}^{b}   \coeff_{(j\,|\, a)},
   \end{equation}
implying Schur-positivity of  $\coeffprime_{(a\,|\,b)}$.

\section*{\bleu{Acknowledgments}} Much of this work would not have been achieved without the possibility of perusing sufficiently large expressions resulting from difficult  explicit computations. These calculations were very elegantly realized by Nicolas Thi\'ery using the Sage computer algebra system, together with an inspired used of the right mathematical properties of objects considered, and his special expertise of higher Specht modules. Indeed, direct calculations of the relevant symmetric function expressions rapidly become unfeasible, even with powerful computers. Such explicit calculations must thus rely on an artful combination of high-level computer algebra skills, and a subtle understanding of the mathematical structures involved. 

\section*{\bleu{Appendix}}

\subsection*{Symmetric functions and plethysm}\label{app_sym}
We mainly use Macdonald's notations (see~\cite{macdonald}).
Thus $p_\mu$, $e_\mu$, $f_\mu$, and $h_\mu$ respectfully stand for the \define{power sum}, \define{elementary}, \define{forgotten}, and \define{complete homogeneous} symmetric functions, with indices integer partitions   $\mu=\mu_1\mu_2\cdots\mu_k$ of $n$.
Recall that these are symmetric functions that are all homogeneous of degree $n=|\mu|:=\mu_1+\mu_2+\ldots +\mu_k$. Assuming that they are evaluated in enough variables, they respectively form bases of the homogenous degree $n$ component $\Lambda_n$ of the graded ring of symmetric function $\Lambda$. It is often useful to avoid writing variables, implicitly making the above assumption. 

The \define{length} $\ell(\mu)$ of a partition $\mu$ is the number of non-zero parts (the $\mu_i$) of $\mu$. 
Recall that partitions are often described in terms of their \define{Ferrers diagram} (herein in French notation). 
This diagram is the set of \define{cells} $(i,j)$ in $\N\times \N$ (considered as points in the usual Cartesian coordinates), with $1\leq i -1\leq \mu_{j+1}$, for $ 0\leq j<\ell(\mu)$. We often write $(i,j)\in\mu$ when the context makes this convention clear. The row lengths of this diagram are the parts of $\mu$. 
One writes $\mu\subseteq \lambda$, when the diagram of $\mu$ is contained in the diagram of $\lambda$. The \define{skew shape} $\lambda/\mu$ has diagram equal to the set-difference of the respective diagrams of $\lambda$ and $\mu$.
The \define{conjugate} $\mu'$ of $\mu$ is the partition with diagram equal to the set  $\{(j,i)\ |\ (i,j)\in\mu\}$. The \define{hook length} of a cell $(i,j)$ of $\mu$ is defined as
                $$h(i,j)=h_{ij}=\mu_{j+1}+\mu_{i+1}'-i-j-1.$$
 The \define{descent set}, $\des(\mu)$, of a partition $\mu$, is the set of $i$ such that $\mu_i>\mu_{i+1}$. For a permutation $\sigma$ of $\{1,\ldots, n\}$, its \define{cycle structure} is the partition $\mu$ of $n$ whose parts are the lengths of its cycles. One denotes by $z_\mu$ the integer such that $n!/z_\mu$ is the number of permutations with cycle structure equal to $\mu$. For partitions $\mu$ and $\nu$, the \define{sum} $\mu+\nu$ is the partition whose parts are $\mu_i+\nu_i$.
 
 A \define{tableau} of shape $\mu$ (or $\lambda/\mu$) is simply a function $\tau:\mu\rightarrow \N$. Thus some value $\tau(i,j)\in\N$ is associated to each cell of $\mu$. If these values are increasing along rows of $\mu$, and strictly increasing along columns, one says that the tableau is a \define{semi-standard Young} tableau. If moreover, $\tau$ is bijective with values going from $1$ to $n=|\mu|$, one says that the tableau is \define{standard}.          
The \define{skew-Schur functions} $s_{\lambda/\mu}$ (with $s_\lambda=s_{\lambda/\emptyset}$, that is for $\mu=\emptyset$ the empty partition) may be considered as enumerators of semi-standard tableaux of shape $\lambda/\mu$. In formula, this says that
    $$s_{\lambda/\mu}(\bm{z})=\sum_{\tau:\lambda/\mu\rightarrow \N} \bm{z}_\tau,\qquad {\rm with}\qquad \textstyle \bm{z}_\tau:=\prod_{c\in\lambda/\mu} z_{\tau(c)},$$
   with $\tau$ running over the set of semi-standard tableaux of shape $\lambda/\mu$. Observe that $s_\lambda(z_1,\ldots,z_k)=0$, whenever $k<\ell(\lambda)$. 
 The Schur functions are orthonormal for the usual \define{Hall scalar product} on $\Lambda$, which may be defined by setting $\scalar{ p_\lambda}{p_\mu} :=z_\mu\delta_{\lambda,\mu}$. As usual $\omega$ stands for the linear multiplicative self adjoint operator such that $\omega\,s_\lambda=s_{\lambda'}$. For this scalar product, the forgotten functions are \define{dual} to the elementary, {\sl i.e.} $\scalar{e_\mu}{f_\lambda} =\delta_{\lambda\mu}$; and the Schur functions are self-dual. 
For a given symmetric function $f$, the linear operator $f^\perp$ is  the \define{adjoint} to the linear operation of multiplication by $f$. In formula, $\scalar{  f\cdot g_1}{g_2}=\scalar{  g_1}{f^\perp g_2}$,  for any symmetric functions $g_1$ and $g_2$. The classical \define{(dual) Pieri rule} (see~\cite{macdonald}), states that
  $e^\perp_k s_\mu =\sum_{\lambda\subset \mu} s_\lambda$,
with the indices of the sum running over partitions $\lambda$ that can be obtained from $\mu$ by removing $k$ cells, no two of which lying on the same row. In particular, $ e^\perp_k s_\mu$ vanishes if $\mu$ has less than $k$ rows.  A symmetric function $f$ is said to be \define{Schur positive} when, for all partition $\mu$, the coefficients of its Schur expansion are polynomials (in some variables) with positive integer coefficients. Sums, products, and ``plethysms'' (see below) of Schur-positive functions are also Schur-positive. Similarly, one has the notion of \define{$e$-positivity}. Since all the $e_\lambda$'s are Schur-positive, we have that $e$-positivity implies Schur positivity. Moreover, $e$-positivity is clearly closed under sums and product.

For symmetric function $f$ and $g$, the \define{plethysm} $f\circ g=f[g]$ is a special case of $\lambda$-ring calculations $f[\bm{a}]$, in  which  symmetric function are considered as operators. The following evaluation rules entirely characterize these, assuming that $\alpha$ and $\beta$ are scalars, and that $\bm{a}$ and $\bm{b}$ lie in some suitable ring:
\begin{align}\itemsep=6pt
{\rm{(i)}}\quad &(\alpha f+\beta g)[\bm{a}]=\alpha\, f[\bm{a}]+\beta\, g[\bm{a}],\qquad &{\rm{(ii)}}&\quad (f\cdot g)[\bm{a}]=f[\bm{a}]\cdot g[\bm{a}],\nonumber\\
{\rm{(iii)}}\quad &p_k[\bm{a}\pm \bm{b}]=p_k[\bm{a}]\pm p_k[\bm{b}],                                   &{\rm{(iv)}}&\quad p_k[ \bm{a}\cdot \bm{b}]=p_k[\bm{a}]\cdot p_k[\bm{b}],\nonumber\\
{\rm{(v)}}\quad &p_k[ \bm{a}/\bm{b}]=p_k[\bm{a}]/p_k[\bm{b}],                                              &{\rm{(vi)}}&\quad p_k[p_j]=p_{kj},\label{def_plethysme}\\
{\rm{(vii)}}\quad &p_k[p_j\otimes p_\ell]=p_{kj}\otimes p_{k\ell},              &{\rm{(viii)}}&\quad p_k[\varepsilon] =(-1)^k,\nonumber\\
{\rm{(ix)}}\quad &p_k[x]=x^k,\ \hbox{if}\  x\  \hbox{a ``variable''},                   
&{\rm{(x)}}&\quad p_k[c]=c,\ \hbox{if}\  x\  \hbox{a ``constant''}.\nonumber
\end{align}
\noindent Hence we specify what are variables and what are constants. The first two properties make it clear that any evaluation of the form $f[\bm{a}]$ may be reduced to instances of the form $p_k[\bm{a}]$. We also assume that property (iii) extends to denumerable sums. Property (viii) implies, that $f[\varepsilon \bm{z}] = \omega\, f(\bm{z})$, for all symmetric function $f$. It may readily be shown that, for all symmetric function $f$,
\begin{equation}\label{e_skewing}
    \sum_{k\geq 0} u^k (e_b^\perp f)(q) = f[q-\varepsilon u].
 \end{equation}
See~\cite{bergeron,haglund} for more on plethysm.


\subsection*{Macdonald polynomials, and operators}\label{app_mac}
Recall that the set of \define{combinatorial Macdonald polynomials}  
 $\{\MacH_\mu(q,t;\bm{z})\}_{\mu\vdash n}$
  forms a linear basis of the ring $\Lambda(q,t)$, of symmetric functions in the variables $\bm{z}=(z_i)_{i\in\N}$ over the field $\Rational(q,t)$. They are uniquely characterized by the equations 
 \begin{displaymath}
 \begin{array}{llll}
    \mathrm{(i)}\ {\displaystyle  \scalar{ s_\lambda(\bm{z})}{\MacH_\mu[q,t;(1-q)\,\bm{z}]}=0},\qquad{\rm if}\qquad  \lambda\not\succeq\mu,\\[6pt]
    \mathrm{(ii)}\ {\displaystyle   \scalar{ s_\lambda(\bm{z})}{ \MacH_\mu[q,t;(1-t)\,\bm{z}]}=0},\qquad{\rm if}\qquad   \lambda\not\succeq\mu',\ \mathrm{and}\\ [6pt]
    \mathrm{(iii)}\ {\displaystyle   \scalar{ s_n(\bm{z})}{ \MacH_\mu(q,t;\bm{z})}=1},
      \end{array}
 \end{displaymath} 
 involving plethystic notation.
See~\cite[Section 3.5]{haimanhilb} for more details on these dominance order triangularities. It follows from the $n!$-theorem (see~\cite{haimanvanishing}) that the $\MacH_\mu$ are Schur positive.
The operators $\nabla$ and $\Delta'_{e_k}$, introduced in~\cite{identity}, are special instances of \define{Macdonald eigenoperators}. This is to say that they have the  Macdonald polynomials  $\MacH_\mu$ as joint eigenfunctions.  Their respective eigenvalues are
   $$\Delta_{e_k}(\MacH_\mu):= e_k[B_\mu]\, \MacH_\mu,\qquad \Delta'_{e_k}(\MacH_\mu):= e_k[B_\mu-1]\, \MacH_\mu,\qquad {\rm with}\qquad \textstyle B_\mu:=\sum_{(i,j)\in \mu}q^it^j.$$
On homogeneous symmetric functions of degree $n$ (and only for those),   the operator $\nabla$ coincides with $\Delta_{e_{n}}=\Delta'_{e_{n-1}}$.  In other terms, the associated eigenvalues are
   \begin{equation}\label{defn_nmu}
        { T_\mu:=q^{\eta(\mu')}t^{\eta(\mu)}=\textstyle \prod_{(i,j)\in\mu} q^it^j},\qquad {\rm with}\qquad \eta(\mu):=\textstyle \sum_{k} (k-1)\mu_k,
    \end{equation}
for $\mu=\mu_1\cdots \mu_\ell$. 
Among the many interesting formulas pertaining to the $\MacH_\mu$ (equivalent to formulas that may be found in~\cite[Exer. 2, page 362]{macdonald}), we have
\begin{equation}
  { \scalar{ \MacH_\mu}{s_{(a\,|\,b)}}  = e_b[B_\mu-1]},\quad \hbox{for all} \quad a+b+1=n.
\end{equation}
In particular,
\begin{equation}
 \scalar{ \MacH_\mu}{s_{1^n}}  = T_\mu, \qquad {\rm and}\qquad \scalar{ \MacH_n}{s_{(a\,|\,b)}} = q^{\binom{b+1}{2}}\qbinom{n-1}{b}\label{Hn_binom}
\end{equation}
We also have the symmetries
   \begin{equation}
   \MacH_\mu(q,t;\bm{x}) =T_\mu\,\omega \MacH_\mu(1/q,1/t;\bm{x}), \qquad {\rm and}\qquad 
   \MacH_\mu(t,q;\bm{x}) =\MacH_{\mu'}(q,t;\bm{x}).
   \end{equation}


\subsection*{The \texorpdfstring{$\Delta$}{D}-conjecture}\label{app_Delta}
In~\cite{haglund}, one finds an explicit combinatorial formula conjectured to be equal to $\Delta'_{e_k}e_n$. With our particular point of view, it takes the form
\begin{equation}\label{delta_conjecture}
  \Delta'_{e_k}e_n=\sum_{\mu\subseteq \delta_{nn}} \Big( \sum_{\myatop{J \supseteq \des(\mu)}{\#J=k}}q^{(J,\bm{a})} \Big)\, \mathbb{L}_\mu(t;\bm{z})
\end{equation}
where the indices $J$ (in the inner sum) run over all suitable subsets of $[n]:=\{1,2,\ldots,n-1\}$, and $(J,\bm{a})$ is shorthand for $\sum_{i\in J} a_i$. We here denote by $\mathbb{L}_\mu(t;\bm{z})$ the vertical strip  LLT-polynomial associated to the path $\mu$. See~\cite{dadderio} for more on these, in particular for a proof that $\mathbb{L}(1+t;\bm{z})$ is $e$-positive.
For instance, when $k=0$, the only non-zero term of the outer sum corresponds to $s_{(\emptyset+1^n)/\emptyset} =e_n$, thus agreeing with $\Delta'_{e_0} e_n$.  At the opposite end of the spectrum, when $k=n-1$, there is but one term in the inner sum (since $J$ must be equal to $[n]$) which is clearly equal to $q^{\area(\mu)}$. Observe that, at $t=1$, the above expression simplifies to
\begin{equation}\label{delta_conjecture_un}
  \Delta'_{e_k}e_n\big|_{t=1}=\sum_{\mu\subseteq \delta_{nn}} \Big( \sum_{\myatop{J\supseteq \des(\mu)}{\#J=k} }q^{(J,\bm{a})} \Big)  s_{(\mu+1^n)/\mu}(\bm{z}).
\end{equation}

\smallskip\goodbreak

\noindent{\bf The $e$-expansions of  $\F_{n}$.}

\noindent Partially expressed in terms of the $\A_n$, we have the following values:

{\footnotesize 
\begin{enumerate}\itemsep=3pt\setlength{\itemindent}{-.3in}
\item[]  $\F_{1}=1\otimes  e_{1}$, \qquad $\F_{2}=\A_2\otimes e_{2}+1\otimes  e_{11}$, \qquad  $\F_{3}=\A_3\otimes e_{3}+(2s_{1} + s_{2})\otimes e_{21}+1\otimes  e_{111}$,
\item[]  $\F_{4}=\A_4\otimes e_{4}+ (e_1^\perp \A_4+\A_3)\otimes e_{31}+(s_{11} + s_{21} + s_{2} + s_{4})\otimes e_{22}
+(3s_{1} + 2s_{2} + s_{3})\otimes e_{211}+1\otimes e_{1111}$,
\item[]  $\begin{aligned} &\F_{5}=\A_5\otimes e_{5}+ (e_1^\perp \A_5+\A_4)\otimes e_{41}+(e_2^\perp \A_5+e_1^\perp \A_4+\A_3)\otimes e_{311}\\
&\qquad +(2s_{111} + 2s_{211} + s_{311} + s_{22} + s_{32} + s_{42}  
+ 2s_{21} + 2s_{31} + 3s_{41} + 2s_{51}+ s_{61}+ 2s_{4} + s_{6} + s_{7} + s_{8})\otimes e_{32}\\
&\qquad
+( s_{22} + 3s_{11} + 4s_{21} + 2s_{31} + s_{41} + 3s_{2} +2s_{3} + 2s_{4} + 2s_{5} + s_{6})\otimes e_{221}\\
&\qquad
+(4s_{1} + 3s_{2} + 2s_{3} + s_{4})\otimes e_{2111}
+1\otimes e_{11111},
\end{aligned}
$

\item[] $
\begin{aligned}
&\F_{6}=\A_6\otimes e_{6} +
(e_1^\perp \A_6+\A_5)\otimes e_{51}+(e_2^\perp \A_6+e_1^\perp \A_5+\A_4)\otimes e_{411}+(e_3^\perp \A_6+e_2^\perp \A_5+e_1^\perp \A_4+\A_3)\otimes e_{3111}\\
&\qquad  +
(2s_{1111} + 2s_{2111} + s_{3111} + s_{4111}  + s_{431} + s_{221} + 2s_{321} + 3s_{421} + s_{521} + s_{621} \\
&\qquad\qquad\qquad + 2s_{211} + 2s_{311} + 5s_{411} + 3s_{511} + 4s_{611} + s_{711} + s_{811}
  + s_{44} + s_{54} + 2s_{43} + 2s_{53} + s_{63} + s_{73} \\
&\qquad\qquad\qquad + 2s_{32} + 3s_{42} + 3s_{52} + 3s_{62} + 3s_{72} + s_{82} + s_{92} \\
&\qquad\qquad\qquad + 2s_{41} + 2s_{51} + 3s_{61} + 3s_{71} + 3s_{81} + 3s_{91} + s_{(10,1)} + s_{(11,1)}
+ 2s_{7} + s_{9} + 2s_{(11)} + s_{(13)})\otimes e_{42}\\
& \qquad +
(s_{1111} + s_{2111} + s_{3111} + s_{331} + s_{221} + s_{321} + s_{421} + s_{521}
+ s_{211} + 2s_{311} + 2s_{411} + 2s_{511} + s_{611} + s_{711}\\
&\qquad\qquad\qquad   + s_{44} + s_{33} + s_{43}  + s_{53} + s_{63}  + s_{22} + 2s_{42} + 2s_{52} + s_{62} + s_{72} + s_{82}\\
&\qquad\qquad\qquad   + 2s_{41} + s_{51} + s_{61} + 2s_{71} + s_{81} + s_{91} + s_{(10,1)} + s_{6} + s_{9} + s_{(12)})\otimes e_{33}\\
& \qquad +
(2s_{221} + 3s_{321} + s_{421} + 6s_{111} + 8s_{211} + 8s_{311} + 4s_{411} + 3s_{511} + s_{611} + 2s_{33} + 3s_{43} + s_{53} \\
&\qquad\qquad\qquad  + 4s_{22} + 8s_{32} + 8s_{42} + 5s_{52} + 4s_{62} + s_{72} 
+ 6s_{21} + 10s_{31} + 12s_{41} + 12s_{51} + 10s_{61} + 5s_{71} + 4s_{81}\\
&\qquad\qquad\qquad    + 6s_{4} + 4s_{5} + 4s_{6} + 4s_{7} + 6s_{8} + 2s_{9} + s_{91} + 3s_{(10)} + s_{(11)})\otimes e_{321}\\
& \qquad +
(s_{221} + s_{111} + 2s_{211} + s_{311} + s_{411} + 2s_{22} + s_{32} + s_{42} + s_{52}\\
&\qquad\qquad\qquad    + 2s_{21} + 2s_{31} + 2s_{41} + 2s_{51} + s_{61} + s_{71} + s_{3} + 2s_{5} + s_{7} + s_{9})\otimes e_{222}\\
& \qquad +
(3s_{22} + 2s_{32} + s_{42} + 6s_{11} + 9s_{21}  7s_{31} + 5s_{41} + 2s_{51} + s_{61}
\\&\qquad\qquad\qquad    
 + 6s_{2} + 6s_{3} + 6s_{4} + 4s_{5} + 5s_{6} + 2s_{7} + s_{8})\otimes e_{2211}\\
& \qquad +
(5s_{1} + 4s_{2} + 3s_{3} + 2s_{4} + s_{5})\otimes e_{21111} +
1\otimes e_{111111}.
\end{aligned}$
\end{enumerate}
}
 
 \goodbreak
\noindent{\bf Lenght components of $\A_7$.}

{\footnotesize 
\begin{enumerate}\itemsep=3pt\setlength{\itemindent}{-.3in}
\item[]  $ \smash{\A_7^{(1)}}=s_{(21)}$, \qquad  $  \smash{A_7^{(6)}}= s_{111111}$;
\item[]  $\begin{aligned}
	& \smash{\A_7^{(2)}}= s_{77} + (s_{76}+ s_{86}   + s_{96})  + (s_{75} +  s_{85} + s_{95} + s_{(10,5)} + s_{(11,5)})   \\
		&\qquad  + (s_{74} + s_{84}  + 2s_{94} + 2s_{(10,4)}   + 2s_{(11,4)}   + s_{(12,4)} + s_{(13,4)}) \\
		&\qquad  + (s_{93}  + s_{(10,3)} + 2s_{(11,3)} + 2s_{(12,3)} +  2s_{(13,3)}  + s_{(14,3)}+ s_{(15,3)}) \\
		&\qquad  +  (s_{(11,2)}  + s_{(12,2)} + 2s_{(13,2)} + s_{(14,2)}  + 2s_{(15,2)}  + s_{(16,2)}  + s_{(17,2)})\\
		&\qquad  +( s_{(15,1)}  + s_{(16,1)} + s_{(17,1)} + s_{(18,1)} + s_{(19,1)});
    \end{aligned}$
\item[]  $\begin{aligned}
	& \smash{\A_7^{(3)}}=     s_{443} + s_{633}  + (s_{442} + s_{542} + s_{642}+ s_{742})   
              + (s_{532} + s_{632}  + s_{732} + s_{832}+ s_{932})   \\
             &\qquad   + (s_{522}  +  s_{722} + s_{822} + s_{922}  + s_{(11,2,2)})
             + s_{661} + (s_{651}  + s_{751} + s_{851})  \\
             &\qquad   + (s_{441}  + s_{541}+ 2s_{641} + 3s_{741} + 3s_{841} + 2s_{941}  + s_{(10,4,1)}) \\
             &\qquad  + (s_{631} + 2s_{731} + 3s_{831} + 3s_{931} + 3s_{(10,3,1)} + 2s_{(11,3,1)} + s_{(12,3,1)})\\
             &\qquad   + (s_{721} + 2s_{821} + 2s_{921}  + 3s_{(10,2,1)}  + 3s_{(11,2,1)} 
             + 2s_{(12,2,1)}  +2s_{(13,2,1)}+ s_{(14,2,1)})  \\
             &\qquad   + (s_{(10,1,1)}  + s_{(11,1,1)} + 2s_{(12,1,1)} + 2s_{(13,1,1)} 
             + 2s_{(14,1,1)}  + s_{(15,1,1)} + s_{(16,1,1)});
   \end{aligned}$
\item[]  $\begin{aligned}
	&\smash{\A_7^{(4)}}=  (s_{4411} + s_{5411}  +s_{6411})+ (s_{4311} + s_{5311}  + 2s_{6311}+ s_{7311}+ s_{8311})\\
              &\qquad   +(s_{4211} +  s_{5211}  + 2s_{6211} + s_{7211}   + 2s_{8211} + s_{9211} + s_{(10,2,1,1)})  \\
              &\qquad   + (s_{6111} + s_{7111} + 2s_{8111}+ 2s_{9111} + 2s_{(10,1,1,1)}  + s_{(11,1,1,1)} + s_{(12,1,1,1)});
   \end{aligned}$
\item[] $ \smash{\A_7^{(5)}}= s_{31111} + s_{41111} + s_{51111} + s_{61111} + s_{71111}$.
 \end{enumerate}
}

\goodbreak
 
\renewcommand{\refname}{\bleu{References}}


\begin{thebibliography}{10}  

\bibitem{ALW}
\auteur{D.\,Armstrong, N.\,Loehr, and G.\,Warrington},
\titreref{Rational parking functions and Catalan numbers},
Annals of Combinatorics, 20 (2016), no. 1, 21--58. 
See \href{http://arxiv.org/abs/1403.1845}{arXiv:1403.1845}.  
 
\bibitem{reiner}
\auteur{D.\,Armstrong, V.\,Reiner, and B.~Rhoades},
\titreref{Parking spaces},
Advances in Mathematics, 269 (2015), 647--706. 
See \href{http://arxiv.org/abs/1204.1760}{arXiv:1204.1760}. 

\bibitem{armstrong}
\auteur{D.\,Armstrong, B.\,Rhoades, and N.\,Williams},
\titreref{Rational associahedra and noncrossing partitions},
Electronic Journal of Combinatorics,  20 (2013), no. 3, Paper 54, 27 pp. \\
   See \href{http://arxiv.org/abs/1305.7286}{arXiv:1305.7286}.  

\bibitem{armstrong}
\auteur{D.\,Armstrong, B.\,Rhoades, and N.\,Williams},
\titreref{Rational associahedra and noncrossing partitions},
Electronic Journal of Combinatorics,  20 (2013), no. 3, Paper 54, 27 pp. 
   See \href{http://arxiv.org/abs/1305.7286}{arXiv:1305.7286}. 

\bibitem{multihooks}
\auteur{F.\,Bergeron},
\titreref{$(\GL_k\times\S_n)$-modules and Nabla of Hook-Indexed Schur functions},
See~\href{http://arxiv.org/abs/1909.03531}{arXiv:1909.03531}

 \bibitem{open}
\auteur{F.\,Bergeron},
\titreref{Open Questions for Operators Related to Rectangular Catalan Combinatorics},  
Journal of Combinatorics  Vol. 8, No. 4 (2017),   673--703. 
See \href{http://arxiv.org/abs/1603.04476}{arXiv:1603.04476}. 


\bibitem{multicomplex}
\auteur{F.\,Bergeron},
\titreref{Multivariate Diagonal Coinvariant Spaces for Complex Reflection Groups}, 
Advances in Mathematics, 239 (2013), 97--108. 
See \href{http://arxiv.org/abs/1105.4358}{arXiv:1105.4358}. 

\bibitem{bergeron}
\auteur{F.\,Bergeron},
\titreref{Algebraic Combinatorics and Coinvariant Spaces}, 
CMS Treatise in Mathematics, CMS and A.K.Peters,  2009.

\bibitem{ScienceFiction}
\auteur{F.\,Bergeron and A.\,M. Garsia},
\titreref{Science Fiction and Macdonald’s Polynomials}, 
in Algebraic Methods and $q$-Special Functions, J.P. Van Dejen, L.Vinet (eds.), 
CRM Proceedings \& Lecture Notes, AMS (1999), 1--52. 

\bibitem{identity}
\auteur{F.\,Bergeron, A.\,M. Garsia, M.\,Haiman, and G.\,Tesler},
\titreref{Identities and Positivity Conjectures for some remarkable Operators in the Theory of Symmetric Functions},
Methods and Applications of Analysis, 
 Volume 6 Number 3 (1999), 363--420. 


\bibitem{trivariate} 
\auteur{F.\,Bergeron, and L.-F.\,Pr\'eville-Ratelle},
\titreref{Higher Trivariate Diagonal Harmonics via generalized Tamari Posets}, 
Journal of Combinatorics, \vol{3} (2012), 317--341. 
See \href{http://arxiv.org/abs/arXiv:1105.3738}{arXiv:1105.3738}. 

\bibitem{hopf} 
\auteur{N.\,Bergeron, C.~Ceballos, and V.~Pilaud},
\titreref{Hopf Dreams}, 
See \href{https://arxiv.org/abs/1807.03044}{https://arxiv.org/abs/1807.03044}

\bibitem{BurbanSchiffmann} 
\auteur{I. Burban and O. Schiffmann},
\titreref{On the Hall algebra of an elliptic curve I},
Duke Mathematical Journal,  \vol{161} (2012), 1171--1231. 

\bibitem{CarlssonMellit}
\auteur{E.~Carlsson and A.~Mellit},
\titreref{A proof of the shuffle conjecture},
 J. Amer. Math. Soc. 31 (2018), no. 3, 661--697. 

\bibitem{dadderio}
\auteur{M. D'adderio},
\titreref{e-Positivity of Vertical Strip LLT Polynomials},
\href{https://arxiv.org/pdf/1906.02633.pdf}{arXiv:1906.02633}

\bibitem{desarmenien}
\auteur{J.\,D\'esarm\'enien, B.\,Leclerc and J.-Y.\,Thibon},
\titreref{Hall-Littlewood Functions and Kostka-Foulkes Polynomials in Representation Theory},
\href{http://www.mat.univie.ac.at/~slc/}{S\'eminaire Lotharingien de Combinatoire} 32 (1994), 
Art. B32c, 38 pp. 

\bibitem{garsia_haiman} 
  \auteur{A.M.\,Garsia and M.\,Haiman},
 \titreref{A Remarkable $q,t$-{C}atalan Sequence and $q$-Lagrange Inversion}, 
 Journal of Algebraic Combinatorics, \vol{5} (1996), no.\,3, 191--244. 

\bibitem{haglund_knot}
\auteur{J.\,Haglund},
\titreref{\href{https://www.math.upenn.edu/~jhaglund/preprints/IMAfinal.pdf}{The combinatorics of knot invariants arising from the study of Macdonald polynomials}}. 
Recent trends in combinatorics, 579--600, IMA Vol. Math. Appl., 159, Springer, 2016. 


\bibitem{haglund}
\auteur{J.\,Haglund,  J.\,Remmel, and A.\,Wilson}, 
\titreref{The Delta Conjecture},  
Transactions of the American Mathe\-matical Society, (accepted).
\href{https://doi.org/10.1090/tran/7096}{doi.org/10.1090/tran/7096} 
See \href{http://arxiv.org/abs/1509.07058}{arXiv:1509.07058}.

\bibitem{haimanhilb}    
\auteur{M.\,Haiman},
\titreref{Combinatorics, symmetric functions and Hilbert schemes},  
In CDM 2002: Current Developments in Mathematics in Honor of  Wilfried Schmid \& George Lusztig, 
International Press Books  (2003) 39--112. 

\bibitem{haimanvanishing}
\auteur{M.\,Haiman}, 
\titreref{Vanishing Theorems and Character Formulas for the Hilbert Scheme of Points in the Plane}, 
Inventiones Mathematicae 149 (2002), 371--407.  

\bibitem{lascoux}
\auteur{A.\,Lascoux},
\titreref{Cyclic permutations on words, tableaux and harmonic polynomials}, 
Proceedings of the Hyderabad Conference on Algebraic Groups, 1989, Manoj Prakashan, Madras (1991), 323--347. 

\bibitem{macdonald} 
\auteur{I.\,G. Macdonald},
\titreref{Symmetric functions and {H}all polynomials}, second ed., Oxford
Mathematical Monographs, The Clarendon Press Oxford University Press, New York, 1995, 
With contributions by A. Zelevinsky, Oxford Science Publications.  
  
 \bibitem{mellit_homology}  
 \auteur{A.\,Mellit},
 \titreref{Homology of Torus Knots}, (2107).
 See \href{http://arxiv.org/abs/arXiv:1704.07630}{arXiv:1704.07630}.
 
 
  \bibitem{mellit_braids}  
  \auteur{A.\,Mellit},
  \titreref{Toric Braids and $(m,n)$-Parking Functions},
See \href{http://arxiv.org/abs/arXiv:1604.07456}{arXiv:1604.07456}.

\bibitem{morton_samuelson}
\auteur{H.\,Morton, and P.\,Samuelson},
\titreref{The HOMFLYPT Skein Algebra of the Torus and the Elliptic Hall Algebra}, 	
Duke Mathematical Journal,  166, no. 5 (2017), 801--854.
See \href{http://arxiv.org/abs/1410.0859}{arXiv:1410.0859}.  



\bibitem{NegutShuffle}
\auteur{A.\,Negut},
\titreref{The Shuffle Algebra Revisited},
International Mathematics Research Notices, (2014) (22): 6242--6275. 
\href{https://doi.org/10.1093/imrn/rnt156}{doi.org/10.1093/imrn/rnt156}. See also:
\href{http://arxiv.org/abs/1209.3349}{arXiv:1209.3349}, (2012). 

\bibitem{Qiu}
\auteur{D.\,Qiu, J.B.\,Remmel, E.\,Sergel, and G.\,Xin},
\titreref{On the Schur Positivity of $\Delta_{e_2} e_n$}, (2017).\\
See \href{https://arxiv.org/pdf/1710.03340}{arXiv:1710.03340}.


\bibitem{sheppard} 
\auteur{G.C.\,Shephard and J.A.\,Todd}, 
\titreref{Finite Unitary Reflection Groups}, 
Canadian Journal of Mathematics \vol{6} (1954),  274--304. 

\bibitem{stanley}
\auteur{R.P.\,Stanley},
\titreref{Parking Functions and Noncrossing Partitions},
The Electronic Journal of Combinatorics, The Wilf Festschrift (Philadelphia, PA, 1996). The Electronic Journal of Combinatorics 4 (1997), no. 2, \href{http://www.combinatorics.org/ojs/index.php/eljc/article/view/v4i2r20}{Research Paper 20}, 14 pages.  
 
 \bibitem{wallace}
 \auteur{N.~Wallace},
 \titreref{Explicit Combinatorial Formulas for Some Irreducible Characters of the $\GL_k \times  \S_n$-module of multivariate diagonal harmonics}, Séminaire Lotharingien de Combinatoire 82B (2019) \href{http://fpsac2019.fmf.uni-lj.si/resources/Proceedings/131.pdf}{Proceedings of the 31st Conference on Formal Power Series and Algebraic Combinatorics}, Article \#71, 12 pp. 

 
  \bibitem{wilson}
  \auteur{A.\,Wilson},
  \titreref{Torus Link Homology and the Nabla Operator},
   Journal of Combinatorial Theory, Series A 154 (2018), 129--144.  
   See \href{http://arxiv.org/abs/1606.00764}{arXiv:1606.00764}.  
  
  \bibitem{zabrocki}
  \auteur{M.\,Zabrocki},
  \titreref{Vertex operators for standard bases of the symmetric functions},
   Journal of Algebraic Combinatorics, 13 (2001), no. 1, 83--101.
   See \href{https://arxiv.org/pdf/math/9904084.pdf}{arXiv:904084}. 

\end{thebibliography}
\end{document}